\title{\Large\scshape A twisted homology fibration criterion \\ and the twisted group-completion theorem}
\author{\small Jeremy Miller and Martin Palmer}
\date{\small\today}
\renewcommand{\footnoterule}{%
  \kern -3pt
  \hrule width \textwidth height 0.4pt
  \kern 2.6pt
}
\renewcommand*{\@seccntformat}[1]{\upshape \csname the#1\endcsname.\hspace{1ex}}
\renewcommand*{\section}{\@startsection{section}{1}{\z@}%
  {2.5ex \@plus 1ex \@minus 0.2ex}%
  {1.5ex \@plus 0.2ex}%
  {\normalfont\large\bfseries}}
\renewcommand*{\subsection}{\@startsection{subsection}{2}{\z@}%
  {2.5ex \@plus 1ex \@minus 0.2ex}%
  {1.5ex \@plus 0.2ex}%
  {\normalfont\normalsize\bfseries}}
\renewcommand*{\subsubsection}{\@startsection{subsubsection}{3}{\z@}%
  {2.5ex \@plus 1ex \@minus 0.2ex}%
  {-1.5ex \@plus -0.2ex}%
  {\normalfont\normalsize\bfseries}}
\renewcommand*{\paragraph}{\@startsection{paragraph}{4}{\z@}%
  {2.5ex \@plus 1ex \@minus 0.2ex}%
  {-1.5ex \@plus -0.2ex}%
  {\normalfont\normalsize\bfseries}}
\renewcommand*{\subparagraph}{\@startsection{subparagraph}{5}{\z@}%
  {2.5ex \@plus 1ex \@minus 0.2ex}%
  {-1.5ex \@plus -0.2ex}%
  {\normalfont\normalsize\slshape}}
\numberwithin{equation}{section}
\numberwithin{figure}{section}
\newcommand{\emb}{\ensuremath{\hookrightarrow}}
\newcommand{\id}{\mathrm{id}}
\newcommand{\hocofib}{\mathit{hocofib}}
\newcommand{\hofib}{\mathit{hofib}}
\newcommand{\hocolim}{\mathit{hocolim}}
\newcommand{\colim}{\mathit{colim}}
\newcommand{\Aut}{\mathrm{Aut}}
\newcommand{\point}{\ensuremath{\text{\itshape pt}}}
\newcommand{\incl}[3][right]%
{%
\draw[<-,>=#1 hook] #2 to ($ #2!0.5!#3 $);
\draw[->] ($ #2!0.5!#3 $) to #3;%
}
\newcommand{\inclusion}[5][right]%
{%
\draw[<-,>=#1 hook] #4 to ($ #4!0.5!#5 $) node[#2,font=\small]{#3};
\draw[->] ($ #4!0.5!#5 $) to #5;%
}
\newcommand{\cA}{\mathcal{A}}
\newcommand{\cB}{\mathcal{B}}
\newcommand{\cF}{\mathcal{F}}
\newcommand{\cG}{\mathcal{G}}
\newcommand{\cH}{\mathcal{H}}
\newcommand{\cM}{\mathcal{M}}
\newcommand{\cU}{\mathcal{U}}
\newcommand{\cW}{\mathcal{W}}
\newcommand{\bN}{\mathbb{N}}
\newcommand{\bR}{\mathbb{R}}
\newcommand{\bZ}{\mathbb{Z}}
\newenvironment{itemize2}%
{\begin{itemize}

\setlength{\itemsep}{1pt}
\setlength{\parskip}{0pt}
\setlength{\parsep}{0pt}}%
{\end{itemize}}
\renewcommand{\cM}{M}
\newcommand{\m}{\to}
\newcommand{\F}{\mathcal{F}}
\newcommand{\U}{\mathcal{B}}
\newcommand{\CC}{\mathfrak{C}}
\newcommand{\TT}{\mathfrak{T}}
\renewcommand{\leq}{\leqslant}
\renewcommand{\geq}{\geqslant}
\newcommand{\snorm}[1]{\lvert #1\rvert}
\newcommand{\dnorm}[1]{\lVert #1\rVert}
\theoremstyle{plain}
\newtheorem{theorem}{Theorem}[section]
\newtheorem{lemma}[theorem]{Lemma}
\newtheorem{proposition}[theorem]{Proposition}
\newtheorem{corollary}[theorem]{Corollary}
\theoremstyle{definition}
\newtheorem{definition}[theorem]{Definition}
\newtheorem{remark}[theorem]{Remark}
\begin{document}
\maketitle
{
\makeatletter
\renewcommand*{\BHFN@OldMakefntext}{}
\makeatother
\footnotetext{2010 \textit{Mathematics Subject Classification}:  55R35, 55R65, 57T30}
\footnotetext{\textit{Key words and phrases}: Homological stability, homology fibrations, local coefficients, group-completion.}
}

\begin{abstract}
\noindent The purpose of this note is to clarify some details in McDuff and Segal's proof of the group-completion theorem in \cite{MSe} and generalize this and the homology fibration criterion of \cite{Mc1} to homology with twisted coefficients. This will be used in \cite{MP} to identify the limiting homology of ``oriented'' configuration spaces, which doubly cover the classical configuration spaces of distinct unordered points in a manifold.
\end{abstract}


\section{Introduction}

\paragraph{Motivation.}
We say that a sequence of spaces $\{Y_k\}$ indexed by the natural numbers exhibits \emph{homological stability} if the homology groups $H_i(Y_k)$ are independent of $k$ for $k\gg i$. There are many examples of this phenomenon, for example classifying spaces of general linear groups \cite{Q,Charney1980}, mapping class groups of orientable and non-orientable surfaces \cite{Ha,Wahl2008}, automorphism groups of free groups \cite{Hatcher1995,HatcherVogtmann1998,HatcherWahl2010}, moduli spaces of instantons \cite{BHMM}, configuration spaces of unordered particles in an open manifold \cite{Mc1,Se}, and ``oriented'' versions of these configuration spaces \cite{Palmer2013}, which are obtained from the ordered configuration spaces by quotienting by the action of the alternating groups rather than the symmetric groups.

When a sequence of spaces exhibits homological stability, the next natural question is how to compute its stable, or limiting, homology. Typically the sequence of spaces comes equipped with maps $Y_k \to Y_{k+1}$ which induce the homology equivalences in a range, and in many examples one can construct a computationally more tractable space $Z$ and a homology equivalence
$\hocolim_k (Y_k)\to Z$. Within the range of homological stability, the problem of computing the homology of the spaces $Y_k$ is then reduced to computing the homology of $Z$. This has been carried out for example for unordered configuration spaces \cite{Mc1}, spaces of rational functions \cite{Se}, mapping class groups of surfaces \cite{MW,Wahl2008} and automorphism groups of free groups \cite{Galatius2011}.

\paragraph{Homology fibrations and group-completion.}
Two important tools for studying stable homology are the homology fibration criterion of \cite{Mc1} (which is the analogue of a criterion for quasi-fibrations from \cite{DT}) and the group-completion theorem of \cite{MSe}. The group-completion theorem implies that if the union $Y=\bigsqcup Y_k$ assembles to form a homotopy-commutative topological monoid, then the limiting space $Z$ can be taken to be a path-component of $\Omega BY$. Here $\Omega$ denotes the based loop space functor and $B$ denotes the classifying space functor. More generally, the group-completion theorem is useful for identifying the homology of modules over a topological monoid $M$: if $X$ is a module over $M$ and the action is by homology equivalences, then $X$ is homology equivalent to $\hofib(X/\!\!/M \to BM)$, where $X/\!\!/M = X \times_M EM$ is the Borel construction. This recovers the previous example by setting $M=Y$ and $X=\hocolim_k(Y_k)$. It is harder to recognize when the homology fibration criterion applies, but it is more versatile than the group-completion theorem, and is a useful tool whenever one is studying the homology of spaces which are stitched together along homology equivalences. Almost all examples of stable homology calculations that we are aware of use the group-completion theorem, the homology fibration criterion or both.

The goal of this note is to prove versions of the group-completion theorem and the homology fibration criterion which apply to homology with \emph{twisted coefficients} (Theorems \ref{pGroupCompletion} and \ref{strongcrit} respectively). In fact, we work more generally with homology equivalences and homology fibrations relative to a fixed class $\CC$ of twisted coefficient systems. An important example of this is the class $\CC=\mathfrak{Ab}$ of abelian coefficient systems (see \S\ref{ssDef}). In many cases, these results allow one to compute the stable homology of the spaces $Y_k$ with twisted coefficients pulled back from $Z$. In \cite{MP}, we will use this to show that the stable homology of oriented configuration spaces is given by the homology of a certain double cover of a section space. We refer to \cite{Palmer2013} for a discussion of oriented configuration spaces and to \cite{Mc1} for a discussion of the relevant section space.

\paragraph{Context.}
In Remark 2 of \cite{MSe}, McDuff and Segal stated a version of the group-completion theorem for twisted coefficient systems. This was again stated in Lemma 3.1 of \cite{Mc3}, but no details were given there or in \cite{MSe} about generalizing the proof to this situation. Another claim implicit in Remark 2 of \cite{MSe} is that the natural action of a homotopy-commutative topological monoid $M$ on $M_\infty = \hocofib(M\to M\to \cdots)$ is by abelian homology equivalences, where the homotopy cofiber is taken over a sequence of maps $-\cdot m_k\colon M\to M$, with the sequence $\{m_k\}$ in $M$ chosen so that $H_*(M_\infty) = H_*(M)[\pi_0(M)^{-1}]$. It is not hard to see that the action is by ordinary homology equivalences, but the statement for all abelian coefficient systems is much more subtle. However, a detailed proof of this statement was given recently in \cite{Randal-Williams2013Groupcompletion}. This note is complementary to \cite{Randal-Williams2013Groupcompletion} in that we prove the claimed version of the group-completion theorem for abelian coefficient systems, which therefore (by \cite{Randal-Williams2013Groupcompletion}) applies to the above example of $M$ acting on $M_\infty$.

Since the publication of \cite{MSe} some mistakes in their proof of the group-completion theorem have been discovered, and methods for addressing these gaps are discussed by G.\ Segal and D.\ McDuff respectively in \cite[pp.\ 56--57]{Se} and \cite[pp.\ 109--110]{Mc3}. In this note we give a complete account of the proof of the group-completion theorem, using some techniques of \cite{GRW14} but closely following the ideas of \cite{MSe}.

\paragraph{History.}
The version of the group-completion theorem proved in \cite{MSe} originally grew out of a theorem of Barratt and Priddy \cite{BarrattPriddy1972}, which was also proved by Quillen (in a preprint which remained unpublished until its inclusion as an appendix of \cite{FriedlanderMazur1994}) and by May \cite{May1975}. Some other proofs of the group-completion theorem have been published since \cite{MSe}, including \cite{Jardine1989} and \cite{Moerdijk1989}, which work with bisimplicial sets. The proof of \cite{Jardine1989} was later generalized in \cite{Tillmann1997} to prove a multiple-object version for topological categories, and this was reproved in \cite{PS} via a more topological treatment in the spirit of \cite{MSe}.

\subsection{Outline}

We work throughout with a class $\CC$ of twisted coefficient systems. We begin in \S\ref{s-prelim} by describing two definitions of $\CC$-homology fibration and give sufficient conditions for these to coincide. In \S\ref{ssCriterion} we then generalize McDuff's homology fibration criterion to a criterion for $\CC$-homology fibrations. Finally, in \S\ref{ss-group-completion} we apply this and the equivalence of the two notions of $\CC$-homology fibration to give a proof of the group completion theorem for twisted coefficient systems.

\subsection{Acknowledgments}

We would like to thank Oscar Randal-Williams and Ulrike Tillmann for several enlightening discussions, and Johannes Ebert for his detailed question \cite{Ebert} on MathOverflow which was likewise enlightening. This note was originally part of the article \cite{MP13}, which was subsequently split in half for length reasons. We would like to thank an anonymous referee for many helpful suggestions and corrections for that article, which have also significantly improved the present note.

\section{Two definitions of homology fibration}\label{s-prelim}

In this section we give two definitions of twisted homology fibration and prove that they are equivalent under reasonable point-set-topological hypotheses. These two notions are twisted analogues of the two definitions of homology fibration introduced in \cite{Mc1} and \cite{MSe} respectively. The equivalence of the two definitions will be important for the proofs of both the twisted homology fibration criterion and the twisted group completion theorem in the later sections.

\subsection{Definitions}\label{ssDef}

All definitions of homology equivalence and homology fibration will depend on a fixed class of local coefficient systems. One can define a local coefficient system on a space $Y$ as either a functor $\pi(Y)\to\mathsf{Ab}$ from the fundamental groupoid of $Y$ to the category of abelian groups or as a bundle of abelian groups over $Y$. It is called \emph{abelian} if, in the bundle viewpoint, the monodromy of any fiber around a commutator loop is trivial. In the functor viewpoint this says that for each object $y\in Y$ the homomorphism $\pi_1(Y,y)=\pi(Y)(y,y)\to \Aut_{\mathsf{Ab}}(\cF(y))$ factors through an abelian group.

\paragraph{A collection of coefficient systems.} Let $\TT$ denote the following category. Its objects are pairs $(X,\F)$ with $X$ a topological space and $\F$ a bundle of abelian groups on $X$. A morphism from $(X_1,\F_1)$ to $(X_2,\F_2)$ in $\TT$ is a continuous map $f\colon X_1 \m X_2$ together with a bundle map $\hat{f}\colon \F_1 \m \F_2$ covering it (i.e., $\hat{f}$ restricts to an isomorphism of abelian groups on each fiber). Throughout this section we fix a subcategory $\CC \subseteq \TT$ with the property that if $g$ is a morphism of $\TT$ whose target is in $\CC$, then $g$ is in $\CC$. We think of $\CC$ as a collection of allowable local coefficient systems and the previous condition can be rephrased as the requirement that this collection be closed under pullbacks. We will be especially interested in the case where $\CC$ is the subcategory $\mathfrak{Ab}$ consisting of all abelian local coefficient systems. However, taking $\CC$ to be all trivial local coefficient systems (the full subcategory on the objects $(X,\F)$ where $\F\to X$ admits a trivialization) or all local coefficient systems ($\CC=\TT$) will also be interesting.

\begin{definition} We call a map $f\colon A \m B$ a \emph{$\CC$-homology equivalence} if the induced map $f_* \colon H_*(A;f^* \F) \m H_*(B;\F)$ is an isomorphism for all local coefficient systems $\F$ with $(B,\F) \in \CC$. When $\CC$ is the subcategory of trivial local coefficient systems, we simply call $f$ a \emph{homology equivalence}. When $\CC=\TT$ we call $f$ an \emph{acyclic map} or a \emph{twisted homology equivalence} and when $\CC=\mathfrak{Ab}$ we call $f$ an \emph{abelian homology equivalence}.
\end{definition}

\label{ss-strong-homology-fib}

We now give two definitions of the property of being a $\CC$-homology fibration. One variant is called a Serre $\CC$-homology fibration since such maps naturally have an associated Serre spectral sequence, and the other variant is called a Leray $\CC$-homology fibration since these maps naturally have an associated Leray spectral sequence.

We denote the homotopy fiber of a map $r\colon Y \m X$ over a point $x\in X$ by $\hofib_x(r)$. More generally, for a subset $U \subseteq X$, the symbol $\hofib_U(r)$ will denote the homotopy fiber product of $U$ and $Y$ over $X$. Concretely, this means the pullback of the diagram%
\begin{center}
\begin{tikzpicture}
[x=1mm,y=1mm]
\node (tr) at (15,12) {$Y$};
\node (bl) at (0,0) {$P_U X$};
\node (br) at (15,0) {$X$};
\draw[->] (bl) to (br);
\draw[->] (tr) to node[right,font=\small]{$r$} (br);
\end{tikzpicture}
\end{center}
where $P_U X = \mathrm{Map}([0,1],\{0\};X,U)$ with the compact-open topology and the horizontal map is evaluation at $1$. Note that this space is sometimes denoted by $U \times_X^h Y$.

\begin{definition}
Let $Z$ be a subspace of a space $X$. A map $r\colon Y \m X$ is called a \emph{Serre $\CC$-homology fibration on $Z$} if for all points $z \in Z$ the natural inclusion $r^{-1}(z) \m  \hofib_z(r|_Z)$ induces an isomorphism on homology for any local coefficient system in $\CC$ pulled back from $\hofib_z(r)$. That is, if $\F$ is a local coefficient system on $\hofib_z(r)$ with $(\hofib_z(r),\F) \in \CC$ and $i\colon r^{-1}(z) \m \hofib_z(r|_Z)$ and $j\colon \hofib_z(r|_Z) \m \hofib_z(r)$ are the natural inclusions, then $i$ induces an isomorphism:
\[
i_*\colon H_*(r^{-1}(z);i^*j^*\F) \m H_*(\hofib_z(r|_Z);j^* \F).
\]
The map $r\colon Y  \m X$ is called simply a Serre $\CC$-homology fibration if it is a Serre $\CC$-homology fibration on $X$.
\end{definition}

When $\CC$ is the subcategory of trivial, abelian or all local coefficients systems we call these maps \emph{Serre homology fibrations}, \emph{abelian Serre homology fibrations} or \emph{twisted Serre homology fibrations} respectively.

\begin{definition}\label{dLerayHF}
Let $X$ be a space and let $Z \subseteq X$ be a locally contractible subspace. A map $r\colon Y \m X$ is called a \emph{Leray $\CC$-homology fibration on $Z$} if there is a basis $\U$ for the topology of $Z$ consisting of contractible sets, such that for all $z\in U\in \U$ and any local coefficient system $\F$ on $\hofib_U(r)$ in $\CC$, the inclusion $i\colon r^{-1}(z) \m r^{-1}(U)$ induces an isomorphism on homology with coefficients pulled back from $\F$. That is, if $j\colon r^{-1}(U) \m \hofib_U(r)$ is the natural inclusion, then
\[
i_* \colon H_*(r^{-1}(z);i^* j^* \F) \m H_*(r^{-1}(U);j^* \F)
\]
is an isomorphism. The map $r\colon Y \m X$ is called simply a Leray $\CC$-homology fibration if it is a Leray $\CC$-homology fibration on $X$. We say that such a basis $\U$ \emph{witnesses} that the map $r$ is a Leray $\CC$-homology fibration.
\end{definition}

We likewise define the terms \emph{Leray homology fibration}, \emph{abelian Leray homology fibration} and \emph{twisted Leray homology fibration}.

\subsection{Equivalence of two definitions of homology fibration}

Next we prove that these two definitions of $\CC$-homology fibration are equivalent under reasonable point-set-topological hypotheses. Much of this is implicit but not explicit in the work of \mbox{McDuff} and Segal in \cite{MSe} (Proposition 5, Proposition 6 and Remark 2). The equivalence of these two definitions will be used in Section \ref{ssCriterion} to generalize McDuff's homology fibration criterion and in Section \ref{ss-group-completion} to prove the twisted group completion theorem. To state when the two notions coincide we need the following definition.

\begin{definition}\label{dLocallyStalklike}
We say that a map $r\colon Y\to X$ is \emph{locally stalk-like} over $Z\subseteq X$ if there is a basis $\U$ for the topology of $Z$ such that each $U\in \U$ is contractible and contains a point $z_U$ such that the inclusion $r^{-1}(z_U) \hookrightarrow r^{-1}(U)$ is a weak equivalence. More generally, we say that $r$ is \emph{locally stalk-like on $\CC$-homology} if in the above the inclusion $r^{-1}(z_U) \hookrightarrow r^{-1}(U)$ is an isomorphism on homology for local coefficient systems in $\CC$ which are pulled back from $\hofib_U(r)$.
\end{definition}

Note that by definition any Leray $\CC$-homology fibration is locally stalk-like on $\CC$-homology. This is therefore a necessary condition for a Serre $\CC$-homology fibration to be a Leray $\CC$-homology fibration; by the following, it is also sufficient.

\begin{proposition}\label{SimpL}
Let $r\colon Y\to X$ be a map of spaces which is locally stalk-like on $\CC$-homology over a subspace $Z\subseteq X$. If $r$ is a Serre $\CC$-homology fibration on $Z$, then it is also a Leray $\CC$-homology fibration on $Z$.
\end{proposition}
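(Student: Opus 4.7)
The strategy is to show that the same basis $\U$ supplied by the locally stalk-like hypothesis also witnesses the Leray $\CC$-homology fibration property. Fix $U\in\U$ and a local coefficient system $\F$ with $(\hofib_U(r),\F)\in\CC$, and let $j\colon r^{-1}(U)\to\hofib_U(r)$ denote the natural inclusion. We must show that for every $z\in U$ the inclusion $i_z\colon r^{-1}(z)\hookrightarrow r^{-1}(U)$ is a $\CC$-homology equivalence for coefficients pulled back from $\F$.

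The central device is the auxiliary homotopy fiber product $\hofib_U(r|_Z)$, formed as in the definition of $\hofib_U(r)$ but with $r$ replaced by the restriction $r|_Z\colon r^{-1}(Z)\to Z$. There is a natural comparison $c\colon\hofib_U(r|_Z)\to\hofib_U(r)$ and a natural inclusion $b\colon r^{-1}(U)\to\hofib_U(r|_Z)$ with $c\circ b=j$; together with the map $d\colon\hofib_z(r|_Z)\to\hofib_U(r|_Z)$ induced by $\{z\}\hookrightarrow U$, these assemble into a commutative square
\[
\begin{CD}
r^{-1}(z) @>i_z>> r^{-1}(U) \\
@Va_zVV @VVbV \\
\hofib_z(r|_Z) @>d>> \hofib_U(r|_Z)
\end{CD}
\]
in which $a_z$ is the usual map from strict to homotopy fiber. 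The two crucial features of this square are that $d$ is a weak equivalence (the homotopy fiber product is invariant under weak equivalences in either factor, and $\{z\}\hookrightarrow U$ is a weak equivalence because $U$ is contractible) and that $b$ does not depend on the choice of $z$.

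The rest of the proof is a two-step two-out-of-three argument. The Serre $\CC$-homology fibration hypothesis, together with closure of $\CC$ under pullback along the weak equivalence $\hofib_z(r)\to\hofib_U(r)$, ensures that $a_z$ is a $\CC$-homology equivalence for the coefficient system $d^*c^*\F$; combined with $d$ being a weak equivalence, the composition $b\circ i_z=d\circ a_z$ is a $\CC$-homology equivalence for every $z\in U$. At the distinguished point $z_U$ the locally stalk-like hypothesis supplies the additional information that $i_{z_U}$ is a $\CC$-homology equivalence, whence two-out-of-three forces $b$ itself to be a $\CC$-homology equivalence. For arbitrary $z\in U$ a second application of two-out-of-three to the square then gives that $i_z$ is a $\CC$-homology equivalence, as required. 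The only delicate point, more a matter of bookkeeping than a real obstacle, is verifying that the pullbacks of $\F$ to $\hofib_z(r|_Z)$ coming from the Serre hypothesis and from traversing the square agree; this follows from commutativity of the canonical maps between $\hofib_z(r|_Z)$, $\hofib_U(r|_Z)$, $\hofib_z(r)$ and $\hofib_U(r)$, together with closure of $\CC$ under pullbacks.
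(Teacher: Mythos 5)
Your proof is correct and takes essentially the same approach as the paper. The paper's version uses a slightly larger commutative diagram that compares $r^{-1}(z)$, $r^{-1}(U)$ and $r^{-1}(z_U)$ simultaneously to their homotopy-fiber counterparts, but the underlying argument — use the Serre hypothesis to make the vertical maps homology isomorphisms, the contractibility of $U$ to make the horizontal maps between homotopy fiber products weak equivalences, the locally-stalk-like hypothesis at $z_U$, and then a two-out-of-three chase — is identical to what you describe.
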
 
\begin{proof}
Let $\U$ be a basis for the topology of $Z$ as in Definition \ref{dLocallyStalklike}. Let $z \in U \in \U$ be arbitrary and let $z_U$ be as above. Consider the following commutative diagram:
\begin{center}
\begin{tikzpicture}
[x=1.2mm,y=1mm]
\node (tl) at (0,12) {$r^{-1}(z)$};
\node (tm) at (25,12) {$r^{-1}(U)$};
\node (tr) at (50,12) {$r^{-1}(z_U)$};
\node (bl) at (0,0) {$\hofib_{z}(r|_Z)$};
\node (bm) at (25,0) {$\hofib_U(r|_Z)$};
\node (br) at (50,0) {$\hofib_{z_U}(r|_Z).$};
\draw[->] (tl) to (tm);
\draw[->] (tr) to (tm);
\draw[->] (bl) to (bm);
\draw[->] (br) to (bm);
\draw[->] (tl) to (bl);
\draw[->] (tm) to (bm);
\draw[->] (tr) to (br);
\end{tikzpicture}
\end{center}
Since $\{z\} \m U$ and $\{z_U\} \m U$ are homotopy equivalences, so are $\hofib_z(r|_Z) \m \hofib_U(r|_Z)$ and $\hofib_{z_U}(r|_Z) \m \hofib_U(r|_Z) $. Fix a local coefficient system $\cF$ in $\CC$ on $\hofib_U(r)$. Then by assumption, $r^{-1}(z_U) \to r^{-1}(U)$ is an isomorphism on homology with coefficients pulled back from $\cF$. Since $r$ is a Serre $\CC$-homology fibration on $Z$, the maps $r^{-1}(z) \m \hofib_z(r|_Z)$ and $r^{-1}(z_U) \m \hofib_{z_U}(r|_Z)$ also induce isomorphisms on homology with coefficients pulled back from $\cF$. Thus the same is true for the map $r^{-1}(z) \m r^{-1}(U)$ and hence $r$ is a Leray $\CC$-homology fibration on $Z$, witnessed by the basis $\U$.
\end{proof}

The statement of the converse is simpler -- every Leray $\CC$-homology fibration over a Hausdorff base space is a Serre $\CC$-homology fibration -- but the proof is more involved. We first recall the following result of Galatius and Randal-Williams. 

\begin{proposition}[{\cite[Corollary 2.9]{GRW14}}]\label{GRW}
Let $C$ be a Hausdorff space and $B_\bullet$ a semi-simplicial set. Let $A_\bullet \subseteq B_\bullet \times C$ be a sub-semi-simplicial space which in each simplicial degree is an open subset. For $c \in C$ define a sub-semi-simplicial set $A_\bullet(c) \subseteq B_\bullet$ by the property $A_\bullet \cap (B_\bullet \times \{c\}) = A_\bullet(c) \times \{c\}$. If its geometric realization $\lVert  A_\bullet(c) \rVert$ is $n$-connected for each $c \in C$ then the map $\lVert  A_\bullet \rVert  \m C$ is $(n+1)$-connected. 
\end{proposition}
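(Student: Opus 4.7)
The plan is to recognize $p\colon \lVert A_\bullet \rVert \to C$ as a \emph{microfibration} (in the sense of Weiss) whose fibers are the spaces $\lVert A_\bullet(c) \rVert$, and then to combine this with the assumed $n$-connectivity of those fibers to deduce $(n+1)$-connectivity of the map itself.

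First I would unpack the cellular structure of $\lVert A_\bullet \rVert$. Since $B_\bullet$ is a semi-simplicial set, each $B_p$ is discrete, so the openness of $A_p \subseteq B_p \times C$ forces a decomposition $A_p = \bigsqcup_{\sigma \in B_p} \{\sigma\} \times U_\sigma$ with $U_\sigma \subseteq C$ open. Consequently $\lVert A_\bullet \rVert$ is assembled by gluing cells $\Delta^p \times U_\sigma$ along face maps, and the fiber of $p$ over a point $c \in C$ is naturally identified with $\lVert A_\bullet(c) \rVert$, where $A_\bullet(c)$ has $p$-simplices $\{\sigma \in B_p : c \in U_\sigma\}$.

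Next I would verify the microfibration property. Each point $y \in \lVert A_\bullet \rVert$ admits a unique representative $((\sigma, c), t)$ with $(\sigma, c) \in A_p$ and $t$ in the open simplex $\mathring{\Delta}^p$. Given a path $\gamma$ in $C$ starting at $c$, the formula $s \mapsto ((\sigma, \gamma(s)), t)$ defines a lift of an initial segment of $\gamma$, valid as long as $\gamma(s)$ stays in the open set $U_\sigma$. Promoting this to a parameterized family $H\colon Y \times [0,1] \to C$ with $Y$ compact and an initial lift $\tilde H_0$ in hand, compactness yields a uniform $\epsilon > 0$ and a partial lift on $Y \times [0, \epsilon]$; the Hausdorff hypothesis on $C$ is used to glue local choices of representative across $Y$ consistently. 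I would then close the argument using the standard fact (following Weiss) that for a microfibration the inclusion of an actual fiber $p^{-1}(c)$ into the homotopy fiber $\hofib_c(p)$ is a weak equivalence. Since the actual fibers $\lVert A_\bullet(c) \rVert$ are $n$-connected by hypothesis, the homotopy fibers are also $n$-connected, and the long exact sequence of homotopy groups then yields that $p$ is $(n+1)$-connected.

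The step I expect to be the main obstacle is the parameterized form of the microfibration property. Lifting a single path is essentially immediate from the cellular structure, but executing the lift uniformly over a compact $Y$ requires the chosen representatives $((\sigma, c), t)$ to vary coherently with $y \in Y$. Since the cells $\Delta^p \times U_\sigma$ involved can have arbitrarily large $p$ and different points of $Y$ can land in different cells, a careful thickening argument is required to patch the local lifts in a way that commutes with the face identifications in $\lVert A_\bullet \rVert$; this is precisely where the Hausdorff hypothesis on $C$ is essentially used.
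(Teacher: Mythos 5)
Your overall strategy --- recognizing $p\colon\lVert A_\bullet\rVert\to C$ as a Serre microfibration with fibers $\lVert A_\bullet(c)\rVert$, and deducing connectivity of the map from connectivity of the fibers --- is precisely the route taken in \cite{GRW14} (whose Proposition~2.5/2.6 and Corollary~2.9 the paper is citing here without reproof), and your description of the cellular decomposition $A_p=\bigsqcup_\sigma\{\sigma\}\times U_\sigma$ and of the local path-lifting step is sound.

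However, the ``standard fact (following Weiss)'' you invoke to close the argument --- that for a Serre microfibration the inclusion $p^{-1}(c)\hookrightarrow\hofib_c(p)$ is a weak equivalence --- is not true. A Serre microfibration need not be a quasifibration. The inclusion $(0,1]\hookrightarrow[0,1]$ is a Serre microfibration (any compact family of lifts lands in some $[\delta,1]$, and continuity keeps the image in $(\delta/2,1]$ for a short time), yet the fiber over $0$ is empty while the homotopy fiber over $0$ is contractible. Weiss's lemma is the weaker statement that a Serre microfibration with \emph{weakly contractible} fibers is a Serre fibration, and what \cite{GRW14} actually prove (their Proposition~2.6) and then apply is: a Serre microfibration whose fibers are all $n$-connected is an $(n+1)$-connected map. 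That statement is established by a direct relative lifting argument against pairs $(D^k,\partial D^k)$ for $k\leq n+1$, using the microfibration property to extend partial lifts in small steps and the $n$-connectivity of fibers to fill in the obstructions --- it does not pass through the claim that the fiber computes the homotopy fiber. So the final step of your proof rests on a false lemma; replacing it with the genuine GRW Proposition~2.6 (or reproducing its lifting argument) would make the proof correct and bring it into exact agreement with the cited source.
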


We also recall a spectral sequence computing the twisted homology of the geometric realization of a semi-simplicial space. 

\begin{lemma}\label{lSpectralSequence}
Let $A_\bullet$ be a semi-simplicial object in $\TT$ and let $X_\bullet$ be its underlying semi-simplicial space. Denote the local coefficient system on $X_n$ by $\cA_n$. There is an induced local coefficient system on $\lVert X_\bullet \rVert$, which we denote by $\cA$. Then there is a first quadrant, homologically graded spectral sequence
\[
E^1_{p,q} \cong H_q(X_p;\cA_p) \qquad\Rightarrow\qquad H_{p+q}(\lVert X_\bullet \rVert ; \cA),
\]
and the edge homomorphism $H_q(X_0;\cA_0) \cong E^1_{0,q} \twoheadrightarrow E^{\infty}_{0,q} \hookrightarrow H_q(\lVert X_\bullet \rVert ; \cA)$ is the map on $\cA$-twisted homology induced by the inclusion of the vertices $X_0 \to \lVert X_\bullet \rVert$.
\end{lemma}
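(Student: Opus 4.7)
The plan is to use the standard skeletal filtration of $\lVert X_\bullet \rVert$. For $p \geq -1$, let $F_p \subseteq \lVert X_\bullet \rVert$ denote the image of $\bigsqcup_{n \leq p} X_n \times \Delta^n$ under the realization quotient, with $F_{-1} = \emptyset$; the inclusion $F_{p-1} \hookrightarrow F_p$ fits into a pushout along the cofibration $X_p \times \partial \Delta^p \hookrightarrow X_p \times \Delta^p$, so it is itself a cofibration, and $\lVert X_\bullet \rVert = \colim_p F_p$. Since $A_\bullet$ is a semi-simplicial object of $\TT$, the bundle maps $\hat{d}_i$ covering the face maps are fiberwise isomorphisms, hence they assemble under realization into a bundle of abelian groups $\cA = \lVert \cA_\bullet \rVert \to \lVert X_\bullet \rVert$, and this is the coefficient system of the statement. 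By construction its restriction to the open $p$-stratum $X_p \times \mathring{\Delta}^p \subseteq F_p \smallsetminus F_{p-1}$ is canonically the pullback $\pi_p^* \cA_p$, where $\pi_p$ denotes projection to $X_p$.

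The exact couple associated to the filtration $F_\bullet$ produces a first-quadrant spectral sequence
\[
E^1_{p,q} = H_{p+q}(F_p, F_{p-1}; \cA) \;\Longrightarrow\; H_{p+q}(\lVert X_\bullet \rVert; \cA),
\]
which converges strongly because the filtration is exhaustive and each stage is a cofibration. To identify the $E^1$-term, excision applied to the pushout above gives
\[
H_{p+q}(F_p, F_{p-1}; \cA) \cong H_{p+q}\bigl(X_p \times \Delta^p,\, X_p \times \partial \Delta^p;\, \pi_p^* \cA_p\bigr),
\]
and then the relative Künneth isomorphism (with $\pi_p^* \cA_p$ on the $X_p$ factor and constant $\Z$-coefficients on the contractible factor $(\Delta^p, \partial \Delta^p)$), combined with $H_k(\Delta^p, \partial \Delta^p;\Z) = \Z$ for $k=p$ and $0$ otherwise, reduces the right-hand side to $H_q(X_p; \cA_p)$, as required.

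For the edge homomorphism claim, observe that $F_0 = X_0$ and $\cA|_{F_0} = \cA_0$, so that $E^1_{0,q} = H_q(X_0; \cA_0)$; the general description of the edge map of a filtration spectral sequence then identifies $E^1_{0,q} \twoheadrightarrow E^\infty_{0,q} \hookrightarrow H_q(\lVert X_\bullet \rVert; \cA)$ with the map induced by the inclusion $F_0 \hookrightarrow \lVert X_\bullet \rVert$, which is exactly the inclusion of vertices. The main obstacle I anticipate is not the spectral sequence machinery, which is routine, but rather the bookkeeping required to make precise the construction of $\cA$ as a bundle on $\lVert X_\bullet \rVert$ and to verify that its restriction to each stratum really is $\pi_p^*\cA_p$ in a way that is compatible with the excision and Künneth identifications above; once this is in place the remaining steps are standard.
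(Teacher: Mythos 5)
Your proposal is correct and follows essentially the same route as the paper's proof: realize the bundle $\cA_\bullet$ to get $\cA$ on $\lVert X_\bullet\rVert$, filter by skeleta to produce the exact-couple spectral sequence, identify $E^1_{p,q}$ by excising down to $(\Delta^p\times X_p,\,\partial\Delta^p\times X_p)$ and applying the Künneth/suspension isomorphism, and read off the edge homomorphism from $F_0=X_0$. The paper does not name the last reduction ``Künneth'' explicitly, but the step is the same.
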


Note that if $X_\bullet$ has an augmentation $X_0 \to X_{-1}$ then any local coefficient system $\cA_{-1}$ on $X_{-1}$ gives a semi-simplicial object in $\TT$ over $X_\bullet$ by pulling back along the compositions of face maps $X_n\to X_{-1}$.

\begin{proof}
There is a local coefficient system on $\bigsqcup_{i\geq 0} (\Delta^i \times X_i)$ pulled back from the coefficient systems $\cA_i$ on $X_i$. Since the face maps of $X_\bullet$ are covered by bundle maps, this descends to a well-defined local coefficient system $\cA$ on the geometric realization $\lVert X_\bullet \rVert$. Note that the pullback of $\cA$ along the characteristic map $\Delta^i \times X_i \to \dnorm{X_\bullet}$ is isomorphic to the pullback of $\cA_i$ along the projection $\Delta^i \times X_i \to X_i$.

The construction of the spectral sequence with local coefficient systems is essentially the same as the construction with trivial coefficients. Filtering $\lVert X_\bullet \rVert$ by its skeleta gives a first quadrant, homologically graded spectral sequence converging to $H_*(\lVert X_\bullet \rVert; \cA)$ with $E^1_{p,q}$ isomorphic to $H_{p+q}(\lVert X_\bullet \rVert^{p}, \lVert X_\bullet \rVert^{p-1};\cA)$, where $\cA$ also denotes its restriction to skeleta. The edge homomorphism on the $q$-axis is the map on homology induced by the inclusion of the zero-space of the filtration, i.e.\ the vertices $X_0 = \lVert X_\bullet \rVert^{0}$. By excision and the observation at the end of the previous paragraph we have
\begin{align*}
E^1_{p,q} \cong H_{p+q}(\lVert X_\bullet \rVert^{p}, \lVert X_\bullet \rVert^{p-1};\cA) &\cong H_{p+q}(\Delta^p \times X_p, \partial \Delta^p \times X_p ; \cA_p) \\
&\cong H_q(X_p;\cA_p).\qedhere
\end{align*}
\end{proof}

Before we prove that Leray $\CC$-homology fibrations are Serre $\CC$-homology fibrations, we prove the following lemma which generalizes Proposition 6 of \cite{MSe}.

\begin{lemma}\label{contractibleCase}
Let $r\colon Y \m X$  be a map of Hausdorff spaces with $X$ weakly contractible and let $\U$ be a basis for the topology of $X$ consisting of contractible sets. Fix a local coefficient system $\F \m Y$ and assume that for all $x \in U \in \U$ the inclusion $r^{-1}(x) \m r^{-1}(U)$ is a homology equivalence with coefficients pulled back from $\F$. Then for any $x\in X$ the inclusion $i\colon r^{-1}(x) \m Y$ is also a homology equivalence with with coefficients pulled back from $\F$.
\end{lemma}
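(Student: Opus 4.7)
The plan is to build an augmented semi-simplicial space $A_\bullet\m Y$ from the basis $\U$, apply Proposition \ref{GRW} to show the augmentation is a weak equivalence, and then run the spectral sequence of Lemma \ref{lSpectralSequence}; the resulting $E^1$-page turns out to be the chain complex of a weakly contractible nerve with coefficients in a local system that is necessarily constant, forcing collapse at $E^2$.

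Concretely, let $\mathcal{N}_\bullet$ denote the (strict) nerve of the poset $(\U,\subseteq)$, viewed as a semi-simplicial set whose $p$-simplices are chains $U_0\subsetneq U_1\subsetneq\cdots\subsetneq U_p$ in $\U$ and whose face map $d_i$ deletes $U_i$. Define a sub-semi-simplicial space of $\mathcal{N}_\bullet\times Y$ by
\[
A_p \;=\; \bigl\{\bigl((U_0\subsetneq\cdots\subsetneq U_p),y\bigr): r(y)\in U_0\bigr\} \;=\; \bigsqcup_{U_0\subsetneq\cdots\subsetneq U_p} r^{-1}(U_0),
\]
which is open in each degree since every $r^{-1}(U_0)$ is open in $Y$, with augmentation $A_p\m Y$ given by projection to $y$. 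For fixed $y\in Y$, the fibre $A_\bullet(y)$ is the strict nerve of the sub-poset $\U_{r(y)}=\{U\in\U:r(y)\in U\}$; since $\U$ is a basis, this poset is downward-directed (any finite collection has a common lower bound), so its classifying space is weakly contractible. Proposition \ref{GRW} then yields that $\dnorm{A_\bullet}\m Y$ is $\infty$-connected, hence a weak equivalence. Pulling back $\F$ along the augmentations promotes $A_\bullet$ to a semi-simplicial object of $\TT$, and Lemma \ref{lSpectralSequence} produces a convergent first-quadrant spectral sequence
\[
E^1_{p,q} \;=\; \bigoplus_{U_0\subsetneq\cdots\subsetneq U_p} H_q(r^{-1}(U_0);\F) \;\Longrightarrow\; H_{p+q}(Y;\F).
\]

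For each $q$, the $d^1$-differential exhibits $(E^1_{*,q},d^1)$ as the chain complex of $\mathcal{N}_\bullet$ with coefficients in the functor $\mathcal{G}_q\colon\U\m\mathsf{Ab}$, $U\mapsto H_q(r^{-1}(U);\F)$, whose transition map along $U\subseteq V$ is induced by the inclusion $r^{-1}(U)\hookrightarrow r^{-1}(V)$. The hypothesis of the lemma, applied at any $x'\in U\subseteq V$, makes both $H_q(r^{-1}(x');\F)\m H_q(r^{-1}(U);\F)$ and the analogous map for $V$ into isomorphisms, so every transition map of $\mathcal{G}_q$ is an isomorphism; hence $\mathcal{G}_q$ descends to a local coefficient system on $\dnorm{\mathcal{N}_\bullet}$. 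Since $(\U,\subseteq)$ is downward-directed, $\dnorm{\mathcal{N}_\bullet}$ is weakly contractible, and any local system on a weakly contractible space is constant; so $E^2_{p,q}$ vanishes for $p>0$ and equals $G_q:=H_q(r^{-1}(x);\F)$ when $p=0$. The spectral sequence therefore collapses, giving $H_n(Y;\F)\cong G_n$. The edge-homomorphism description in Lemma \ref{lSpectralSequence} identifies this isomorphism with the inclusion-induced map $i_\ast$: choosing $U\in\U$ with $x\in U$, the edge map on the summand $H_q(r^{-1}(U);\F)\subseteq E^1_{0,q}$ is induced by $r^{-1}(U)\hookrightarrow Y$, and pre-composing with the isomorphism $H_q(r^{-1}(x);\F)\xrightarrow{\cong}H_q(r^{-1}(U);\F)$ from the hypothesis is precisely $i_\ast$.

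The main obstacle is the collapse of the spectral sequence, which has two components: recognising the $d^1$-differential as the differential of $\mathcal{N}_\bullet$ with coefficients in the functor $\mathcal{G}_q$ (a routine if slightly fiddly bookkeeping of face maps, using that $d_0$ acts by the inclusion $r^{-1}(U_0)\hookrightarrow r^{-1}(U_1)$ while all other $d_i$ act trivially on $r^{-1}(U_0)$), and showing that $H_p(\dnorm{\mathcal{N}_\bullet};\mathcal{G}_q)$ is concentrated in degree zero. The latter rests on the classical fact that the classifying space of a downward-directed poset is weakly contractible, combined with the triviality of local systems on simply connected spaces---this is precisely what rules out nontrivial monodromy in the functor $\mathcal{G}_q$ and allows the hypothesis to propagate from individual basis elements to all of $Y$.
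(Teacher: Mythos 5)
Your overall strategy (one augmented semi-simplicial space over $Y$, one application of Proposition \ref{GRW}, one spectral sequence, collapse from contractibility of the nerve) follows the same outline as the paper, but it omits a second ingredient that the paper needs, and the place where you try to make do without it contains an error.

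The gap is in the sentence ``Since $(\U,\subseteq)$ is downward-directed, $\dnorm{\mathcal{N}_\bullet}$ is weakly contractible.'' The poset $\U$ of \emph{all} basic open sets is essentially never downward-directed: two disjoint basis elements have no common lower bound. (You correctly used directedness earlier, but only for the sub-poset $\U_{r(y)}$ of basis elements containing a fixed point --- that one \emph{is} directed because $\U$ is a basis. The full poset $\U$ is not.) So the claim that $\dnorm{\mathcal{N}_\bullet}$ is weakly contractible, which is exactly what forces $E^2_{p,q}=0$ for $p>0$, is left unjustified. The conclusion itself is true --- one expects $\dnorm{\mathcal{N}_\bullet}\simeq_{\mathrm{w}} X$, which is weakly contractible --- but this is a nerve-theorem-type statement that requires a real argument, not directedness.

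The paper circumvents this by \emph{not} trying to directly identify $E^2_{p,q}$ with the homology of the discrete nerve. Instead it introduces a second augmented semi-simplicial \emph{space} $X_\bullet$ over $X$ (with $X_p = \bigsqcup_{U_0\subseteq\cdots\subseteq U_p} U_0$), applies Proposition \ref{GRW} a second time to get $\dnorm{X_\bullet}\simeq_{\mathrm{w}} X$, and builds a second spectral sequence from $X_\bullet$ with the coefficient system $\cH_{q,\bullet}$ encoding your functor $\mathcal{G}_q$. That spectral sequence collapses because each $U_0$ is contractible, so its abutment $H_p(\dnorm{X_\bullet};\cH_q)\cong H_p(X;\cH_q)$ vanishes for $p>0$, and its $E^1$ row is identified with your $E^1$ column, which is what kills the $p>0$ part of $E^2$. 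If you want to keep your one-spectral-sequence presentation, you could instead apply Proposition \ref{GRW} to $X_\bullet\to X$ and then argue that the levelwise weak equivalence $X_\bullet\to\mathcal{N}_\bullet$ (each $U_0$ contractible) induces a weak equivalence $\dnorm{X_\bullet}\to\dnorm{\mathcal{N}_\bullet}$; but as written the directedness argument is simply false and must be replaced.
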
 

\begin{proof}
The idea of the proof is to use semi-simplicial spaces to build a Leray-type spectral sequence computing $H_*(Y;\F)$, whose $E_2$ page is the homology of the base $X$ with coefficients in the homology of the fibers of $r$ (with twisted coefficients). Since $X$ is contractible, we will be able to show that the spectral sequence collapses at the $E_2$ page and then that $i_*\colon H_*(r^{-1}(x);i^*\F) \m H_*(Y;\F)$ is an isomorphism. 

\paragraph{\normalfont\itshape Some semi-simplicial spaces.}
Let $X_\bullet$ be the semi-simplicial nerve of the topological poset $\bigsqcup_{\{U \in\cB\}} U$ in which $(U,y)\leq (V,x)$ iff $y=x$ and $U\subseteq V$. Similarly, let $Y_\bullet$ be the semi-simplicial nerve of the topological poset $\bigsqcup_{\{U\in\cB\}} r^{-1}(U)$ in which $(U,y)\leq (V,x)$ iff $y=x$ and $r^{-1}(U) \subseteq r^{-1}(V)$. So the spaces of $n$-simplices are:
\begin{align*}
X_n &= \textstyle{\bigsqcup}_{\{U_0 \subseteq \dotsb \subseteq U_n\}} U_0 &
Y_n &= \textstyle{\bigsqcup}_{\{r^{-1}(U_0) \subseteq \dotsb \subseteq r^{-1}(U_n)\}} r^{-1}(U_0).
\end{align*}

There is an augmentation map $\xi \colon X_0 = \bigsqcup_{\{U\in\cB\}}U \to X$ given by the sum of the inclusion maps, and similarly an augmentation map $\phi \colon Y_0 = \bigsqcup_{\{U\in\cB\}}r^{-1}(U) \to Y$. To show that the augmentation map $\xi$ is a weak equivalence, we apply Proposition \ref{GRW} with $C=X$, $A_\bullet=X_\bullet$ and $B_\bullet$ the semi-simplicial nerve of the poset $\U$ partially ordered by inclusion. For each $i$, the space $X_i$ naturally sits as a subspace of $X \times B_i$ and this subspace is open because the elements of $\U$ are open sets. The semi-simplicial set $X_\bullet(x)$ is the nerve of the subposet of $\U$ consisting of all basic open sets which contain $x$. Since $\U$ is a basis this is filtered, i.e.\ a directed set, so $\dnorm{X_\bullet(x)}$ is contractible. Proposition \ref{GRW} therefore implies that $\lVert \xi \rVert \colon \lVert X_\bullet \rVert \m X$ is a weak equivalence. A similar argument shows that $\lVert \phi \rVert \colon \lVert Y_\bullet \rVert \m Y$ is also a weak equivalence.

\paragraph{\normalfont\itshape Spectral sequences.}
We now use these semi-simplicial spaces to construct spectral sequences. The coefficient system $\F$ on $Y=Y_{-1}$ determines a semi-simplicial object $\cF_\bullet$ in $\TT$ over $Y_\bullet$ (c.f.\ the remark after Lemma \ref{lSpectralSequence}). By Lemma \ref{lSpectralSequence} and the fact that $\dnorm{\xi}$ is a weak equivalence we have a spectral sequence $E_{**}^*$ with $E_{p,q}^1 \cong H_q(Y_p ; \cF_p)$ converging to $H_*(Y;\cF)$.

We next define for each $k\geq 0$ a semi-simplicial object $\cH_{k,\bullet}$ in $\TT$ whose underlying semi-simplicial space is $X_\bullet$. We define the total space of $\cH_{k,n}$ to be
\[
\bigsqcup_{U_0 \subseteq \dotsb \subseteq U_n} U_0 \times H_k(r^{-1}(U_0);j_0^*\F),
\]
where $j_i \colon r^{-1}(U_i) \m Y$ is the inclusion map. It is given the structure of a bundle of abelian groups over $X_n$ by projecting onto the first factor; over each $U_0$ corresponding to a nested sequence $U_0 \subseteq \dotsb \subseteq U_n$ it is a trivial bundle. For $i>0$, the face map $d_i\colon X_{n+1} \m X_n$ is just a sum of identity maps $U_0 \m U_0$. We likewise define the $i$th face map $\cH_{k,n+1} \m \cH_{k,n}$ for $i>0$ to be the sum of identity maps $U_0 \times H_k(r^{-1}(U_0);j_0^*\F) \m U_0 \times H_k(r^{-1}(U_0);j_0^*\F)$. This is clearly a bundle map. The face maps $d_0\colon X_{n+1} \m X_{n}$ are given by sums of inclusions $\iota\colon U_0 \emb U_1$. We cover this with the corresponding sum of the maps
\[
\iota \times \tilde{\iota}_*\colon U_0 \times H_k(r^{-1}(U_0);j_0^*\F) \longrightarrow U_1 \times H_k(r^{-1}(U_1);j_1^*\F),
\]
where $\tilde{\iota}$ is the inclusion $r^{-1}(U_0) \emb r^{-1}(U_1)$. To see that $\iota \times \tilde{\iota}_*$ is a bundle map we need to check that $\tilde{\iota}_*\colon  H_k(r^{-1}(U_0);j_0^*\F) \m  H_k(r^{-1}(U_1);j_1^*\F)$ is an isomorphism. To see this, choose a point $u\in U_0$ and consider the commutative diagram
\begin{center}
\begin{tikzpicture}
[x=1mm,y=1mm]
\node (l) at (0,6) {$r^{-1}(u)$};
\node (t) at (30,12) {$r^{-1}(U_0)$};
\node (b) at (30,0) {$r^{-1}(U_1)$};
\node (r) at (60,6) {$Y$};
\incl{(l)}{(t)}
\incl{(l)}{(b)}
\inclusion{right}{$\tilde{\iota}$}{(t)}{(b)}
\draw[->] (t) to node[above,font=\small]{$j_0$} (r);
\draw[->] (b) to node[below,font=\small]{$j_1$} (r);
\end{tikzpicture}
\end{center}
\noindent in which each space is given coefficients pulled back from $\cF\to Y$. By hypothesis, the leftmost two inclusions are isomorphisms on homology with these coefficients, and therefore so is $\tilde{\iota}$.\footnote{Note that it would \emph{not} suffice to know that each $U\in\U$ contains some $z_U$ so that $r^{-1}(z_U) \to r^{-1}(U)$ is an isomorphism on homology with coefficients pulled back from $\cF$ (c.f.\ Definition \ref{dLocallyStalklike}), since there is no reason why $z_{U_1}$ should be contained in $U_0$.} This shows that the map $\iota \times \tilde{\iota}_*$ is a bundle map and hence is a morphism in the category $\TT$. Checking that the semi-simplicial identities are satisfied is straightforward. We therefore have a semi-simplicial object $\cH_{k,\bullet}$ in $\TT$ over $X_\bullet$. Denote the induced local coefficient system on $\dnorm{X_\bullet}$ by $\cH_k$. Then by Lemma \ref{lSpectralSequence} there is a spectral sequence ${}_{(k)}E^*_{**}$ with ${}_{(k)}E^1_{p,q} \cong H_q(X_p ; \cH_{k,p})$ converging to $H_*(\lVert X_\bullet \rVert ; \cH_k)$.

\paragraph{\normalfont\itshape Comparing the spectral sequences.}
Note that
\[
{}_{(k)}E^1_{p,q} \cong\, \textstyle{\bigoplus} H_q(U_0 ; H_k(r^{-1}(U_0) ;j_0^*\cF)) = \begin{cases} {\bigoplus} H_k(r^{-1}(U_0) ;j_0^*\cF) & q=0 \\
0 & q>0
\end{cases}
\]
and
\[
E^1_{p,q} \cong\, \textstyle{\bigoplus} H_q(r^{-1}(U_0) ;j_0^*\cF),
\]
where the sum is always over nested subsets $U_0 \subseteq \dotsb \subseteq U_p$ in $\cB$. The differentials also agree, so the chain complex ${}_{(q)}E^1_{*,0}$ is isomorphic to the chain complex $E^1_{*,q}$. But ${}_{(q)}E^*_{**}$ collapses after its second page since each $U_0$ is contractible and so
\begin{align*}
E^2_{p,q} &\cong {}_{(q)}E^2_{p,0} \\
&\cong \text{degree-$p$ part of the limit of ${}_{(q)}E^*_{**}$} \\
&= H_p(\lVert X_\bullet \rVert ; \cH_q).
\end{align*}
But $\dnorm{X_\bullet} \simeq_{\mathrm{w}} X$ is weakly contractible, so $E^2_{p,q} = 0$ for $p>0$ and $E^2_{0,q} \cong H_q(r^{-1}(U); j^* \cF)$ for any $U\in\cB$ with $j\colon r^{-1}(U) \m Y$ the inclusion. Hence $E^*_{**}$ also collapses after its second page, and
\begin{equation}\label{eEdgeHomomorphism}
H_q(r^{-1}(U); j^* \cF) \cong E^2_{0,q} \cong E^{\infty}_{0,q} \cong H_q(Y; \cF).
\end{equation}
Moreover (c.f.\ the identification of the edge homomorphisms in Lemma \ref{lSpectralSequence}) this isomorphism is the map on homology with coefficients in $\cF$ induced by $j$.

Now pick $x\in X$; we would like $i\colon r^{-1}(x) \emb Y$ to induce isomorphisms on homology with coefficients in $\cF$. Choose $U\in\U$ containing $x$, so $i$ factors as the inclusions $r^{-1}(x)\emb r^{-1}(U)\emb Y$. These each induce isomorphisms on homology with coefficients in $\cF$, the first by hypothesis and the second by the paragraph above.
\end{proof}

\begin{proposition}\label{LimpS}
Let $r\colon Y \m X$ be a map of topological spaces and let $Z \subseteq X$ be a Hausdorff subspace such that $r^{-1}(Z)$ is also Hausdorff. If $r$ is a Leray $\CC$-homology fibration on $Z$, then it is also a Serre $\CC$-homology fibration on $Z$.
\end{proposition}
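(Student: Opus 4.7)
The plan is to pull back $r$ along a weakly contractible path space and apply Lemma \ref{contractibleCase}. Fix $z \in Z$ and a local coefficient system $\cF$ in $\CC$ on $\hofib_z(r)$; write $\tilde\cF$ for its pullback to $\tilde Y := \hofib_z(r|_Z)$. Consider the projection
\[
\tilde r \colon \tilde Y \longrightarrow P_{\{z\}} Z, \qquad (\gamma, y) \longmapsto \gamma.
\]
Then $P_{\{z\}} Z$ is weakly contractible via the straight-line homotopy $(\gamma, s) \mapsto (t \mapsto \gamma(s t))$; the base and total space are both Hausdorff since $Z$ and $r^{-1}(Z)$ are; and the fiber $\tilde r^{-1}(c_z)$ over the constant path $c_z$ is canonically $r^{-1}(z)$, with the associated inclusion $r^{-1}(z) \hookrightarrow \tilde Y$ being precisely the map whose $\tilde\cF$-homology equivalence we aim to establish. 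By Lemma \ref{contractibleCase} it suffices to produce a basis $\tilde\cB$ of contractible open subsets of $P_{\{z\}} Z$ such that for each $\gamma \in V \in \tilde \cB$ the fiber inclusion $\tilde r^{-1}(\gamma) \hookrightarrow \tilde r^{-1}(V)$ is a homology equivalence for coefficients pulled back from $\tilde \cF$.

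For such a basis, let $\cB$ be the Leray basis on $Z$ witnessing the hypothesis and take $\tilde\cB$ to consist of the open "tube" sets
\[
V(\vec W, \vec t) := \{\gamma \in P_{\{z\}} Z : \gamma([t_{i-1}, t_i]) \subseteq W_i,\ i = 1, \dotsc, n\}
\]
indexed by partitions $0 = t_0 < \dotsb < t_n = 1$ and sequences $W_1, \dotsc, W_n \in \cB$ (necessarily with $z \in W_1$ and $W_i \cap W_{i+1} \neq \emptyset$). Standard facts about the compact-open topology show that $\tilde\cB$ is a basis for $P_{\{z\}} Z$, and an induction on $n$ using the contractibility of each $W_i$, together with the fact that the evaluation-at-$1$ map $V(\vec W, \vec t) \to W_n$ is a Serre fibration, shows each tube is weakly contractible. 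For the stalk-like condition, set $U := W_n$; evaluation at $1$ factors through $U$, so $\tilde r^{-1}(V) = V \times_U r^{-1}(U)$ and the projection $\pi\colon \tilde r^{-1}(V) \to r^{-1}(U)$ is a Serre fibration with weakly contractible fibers, hence a weak equivalence. Under $\pi$, the fiber inclusion $r^{-1}(\gamma(1)) \hookrightarrow \tilde r^{-1}(V)$ corresponds to the Leray inclusion $r^{-1}(\gamma(1)) \hookrightarrow r^{-1}(U)$, which is a $\CC$-homology equivalence for coefficients pulled back from any local system in $\CC$ on $\hofib_U(r)$.

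The main obstacle is the final compatibility check for coefficient systems: the transport of $\tilde\cF|_{\tilde r^{-1}(V)}$ across the weak equivalence $\pi$ must be identified, as a local system on $r^{-1}(U)$, with the pullback along $r^{-1}(U) \to \hofib_U(r)$ of some $\cF' \in \CC$ on $\hofib_U(r)$. To construct $\cF'$ we use the tube structure to produce a chain of natural maps $\hofib_z(r) \to \hofib_{W_1}(r) \to \dotsb \to \hofib_U(r)$, each a weak equivalence because each $W_i$ is contractible (using compatible basepoints in the intersections $W_i \cap W_{i+1}$); closure of $\CC$ under pullbacks then propagates $\CC$-membership from $\cF$ along this chain to $\cF'$. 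The identification of the two local systems on $r^{-1}(U)$ is then verified at the level of fundamental-groupoid holonomies, using that $\tilde\cF$ was pulled back from $\cF$ and that the truncation $(\gamma, y) \mapsto (\gamma|_{[t_{n-1},1]}, y)$ provides the last link $\tilde r^{-1}(V) \to \hofib_U(r)$ compatible with $\pi$. Once this identification is in hand, Lemma \ref{contractibleCase} applied to $\tilde r$ delivers the desired $\tilde\cF$-homology equivalence, completing the proof that $r$ is a Serre $\CC$-homology fibration on $Z$.
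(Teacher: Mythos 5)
Your overall strategy is the same as the paper's: project $\hofib_z(r|_Z)$ to the based path space $P_{\{z\}}Z$, verify the stalk-like hypothesis there, and invoke Lemma~\ref{contractibleCase}. However, there is a genuine gap in the construction of the basis $\tilde{\cB}$: the ``tube'' sets
\[
V(\vec W, \vec t) = \{\gamma\in P_{\{z\}}Z : \gamma([t_{i-1},t_i])\subseteq W_i\}
\]
need \emph{not} be contractible, or even connected. The issue is that $\gamma(t_i)$ is only constrained to lie in $W_i\cap W_{i+1}$, and this intersection of two basis elements can be disconnected. Take for instance $Z=S^1$ with $z=e^{i\pi/2}$, $W_1 = \{e^{i\theta}: -3\pi/4<\theta<3\pi/4\}$ and $W_2 = \{e^{i\theta}: \pi/4<\theta<7\pi/4\}$. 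Then $W_1\cap W_2$ has two components, and for $n=2$, $t_1=1/2$, the paths in $V(\vec W,\vec t)$ with $\gamma(1/2)$ near $e^{i\pi/2}$ and those with $\gamma(1/2)$ near $e^{-i\pi/2}$ lie in different path-components of the tube. Your claimed induction on $n$ using the evaluation-at-$1$ Serre fibration $V\to W_n$ fails precisely here: the base $W_n$ is contractible, but the \emph{fiber} (paths in the tube with fixed endpoint $w$) has the same disconnectedness problem. Since Lemma~\ref{contractibleCase} explicitly requires a basis of \emph{contractible} sets, this is not a cosmetic omission. The paper's remedy is to enlarge the combinatorial structure to ``chains'' $(U_1,V_1,\dotsc,U_n,V_n)$ with $U_i\supseteq V_i\subseteq U_{i+1}$ in $\cB$, and to impose the checkpoint condition $\gamma(t_i)\in V_i$ in addition to $\gamma([t_{i-1},t_i])\subseteq U_i$; then evaluation at \emph{all} intermediate times $e\colon W_{c,t}\to V_1\times\dotsb\times V_n$ is a Hurewicz fibration with contractible base \emph{and} contractible fibers, which is what gives contractibility of the tubes.

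A secondary weakness is your handling of the coefficient comparison: the ``chain of natural maps $\hofib_z(r)\to\hofib_{W_1}(r)\to\dotsb\to\hofib_U(r)$'' requires a zigzag through compatible basepoints in the $W_i\cap W_{i+1}$, which is exactly the checkpoint data that your tubes fail to carry; and with disconnected intersections the resulting transport of local systems would depend on the choice of component. The paper instead compares $\hofib_W(g)$ directly with $\hofib_{V_n}(r|_Z)$ via a single pullback square of path-space fibrations (their~\eqref{ePullbackSquare2}), from which the required $\CC$-membership of the transported system follows cleanly by pulling back along a homotopy inverse. Once you add the checkpoint constraints to your tubes, the rest of your outline can be made to coincide with the paper's argument.
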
 

\begin{proof}
Let $\cB$ be a basis for the topology of $Z$ witnessing that $r$ is a Leray $\CC$-homology fibration on $Z$. Fix a point $z_0\in Z$ and a local coefficient system $\cF\to \hofib_{z_0}(r)$ in $\CC$. We need to show that the maps
\[
r^{-1}(z_0) \xrightarrow{i} \hofib_{z_0}(r|_Z) \xrightarrow{j} \hofib_{z_0}(r)
\]
induce an isomorphism $H_*(r^{-1}(z_0) ; i^* j^* \cF) \to H_*(\hofib_{z_0}(r|_Z) ; j^* \cF)$ --- in other words that $i$ is a $\cF$-homology isomorphism. As in Proposition 5 of \cite{MSe}, our goal is to reduce this to the case where the base space is contractible so we can use Lemma \ref{contractibleCase}.

Let $PZ$ be the pathspace $\mathrm{Map}([0,1],\{0\};Z,z_0)$ and let $p\colon PZ \to Z$ be the projection given by evaluating at $1$. Let $\alpha_0 \in PZ$ be the constant path at $z_0$. We have a pullback square:
\begin{equation}\label{ePullbackSquare}
\centering
\begin{split}
\begin{tikzpicture}
[x=1mm,y=1mm]
\node (tl) at (0,15) {$F = \hofib_{z_0}(r|_Z)$};
\node (tr) at (30,15) {$PZ$};
\node (bl) at (0,0) {$r^{-1}(Z)$};
\node (br) at (30,0) {$Z.$};
\draw[->] (tl) to node[above,font=\small]{$g$} (tr);
\draw[->] (bl) to node[below,font=\small]{$r|_Z$} (br);
\draw[->] (tl) to node[left,font=\small]{$q$} (bl);
\draw[->] (tr) to node[right,font=\small]{$p$} (br);
\node at (5,10) {$\lrcorner$};
\end{tikzpicture}
\end{split}
\end{equation}
Also we can canonically identify $g^{-1}(\alpha_0) = r^{-1}(z_0)$, and under this identification the inclusion $g^{-1}(\alpha_0) \hookrightarrow F$ is the map $i\colon r^{-1}(z_0) \to \hofib_{z_0}(r|_Z)$.

Define an open covering $\cW$ of $PZ$ as follows. A sequence $c=(U_1,V_1,\dotsc,U_n,V_n)$ of elements of $\cB$ is called a \emph{chain} if we have:
\[
U_1 \supseteq V_1 \subseteq U_2 \supseteq \dotsb \subseteq U_n \supseteq V_n.
\]
Given such a chain in $\cB$ and a sequence of numbers $t=(t_0,\dotsc,t_n)$ satisfying $0=t_0 < t_1 < \dotsb < t_n = 1$ we define $W_{c,t} \subseteq PZ$ to be the subset of paths $\alpha$ with $\alpha(t_i) \in V_i$ and $\alpha([t_{i-1},t_i]) \subseteq U_i$ for $i=1,\dotsc,n$. The covering $\cW$ of $PZ$ is defined to be the collection of all such sets $W_{c,t}$.

We claim that $\cW$ is a basis for $PZ$ witnessing that $g$ is a Leray $\CC$-homology fibration. First, note that $\cW$ forms a basis for the (compact-open) topology of $PZ$.\footnote{It is easy to see that $\cW$ is a basis for \emph{a} topology; one then needs to show that for any path $\gamma\in PZ$ taking a compact subset $K\subseteq [0,1]$ to $B\in\cB$, there is a set of the form $W_{c,t}$ containing $\gamma$ such that $\gamma^\prime(K)\subseteq B$ for all $\gamma^\prime \in W_{c,t}$. This can be done using the fact that for any open neighborhood $U$ of $K$, there is a finite union of closed intervals $K^\prime$ such that $K\subseteq K^\prime \subseteq U$.} Second, the subsets $W_{c,t}$ are contractible since the product of evaluation maps $e\colon W_{c,t} \m V_1 \times \dotsb \times V_n$ is a Hurewicz fibration with contractible base and fibers. In more detail, let $h\colon V_1\times \dotsb \times V_n \times [0,1] \to V_1\times \dotsb \times V_n$ be a homotopy from the identity to the constant map at the point $\vec{v}$. The composition $h\circ (e\times\id_{[0,1]})$ can be lifted up the Hurewicz fibration $e$ to a homotopy $\tilde{h}$ from the identity on $W_{c,t}$ to a map which factors as $W_{c,t} \to e^{-1}(\vec{v}) \hookrightarrow W_{c,t}$. Inserting a contraction for the fiber $e^{-1}(\vec{v})$ into the middle of this composition gives a further homotopy to a constant map.

Now let $\gamma \in W\in \cW$ and choose a local coefficient system $\cG\to\hofib_W(g)$ in $\CC$. We need to show that $g^{-1}(\gamma) \hookrightarrow g^{-1}(W)$ is an isomorphism on homology with $\cG$-coefficients. Consider the diagram
\begin{equation}\label{eComparingFibers}
\centering
\begin{split}
\begin{tikzpicture}
[x=1mm,y=1mm]
\node (t1) at (0,10) {$g^{-1}(\gamma)$};
\node (t2) at (25,10) {$g^{-1}(W)$};
\node (t3) at (50,10) {$\hofib_W(g)$};
\node (b1) at (0,0) {$r^{-1}(\gamma(1))$};
\node (b2) at (25,0) {$r^{-1}(V_n)$};
\node (b3) at (50,0) {$\hofib_{V_n}(r|_Z)$};
\incl{(t1)}{(t2)}
\incl{(b1)}{(b2)}
\draw[->] (t2) to (t3);
\draw[->] (b2) to (b3);
\draw[->] (t2) to (b2);
\draw[->] (t3) to (b3);
\node at (0,5) {\rotatebox{90}{$=$}};
\end{tikzpicture}
\end{split}
\end{equation}
where $V_n\in\cB$ is the last open set in the chain $c$ determining $W=W_{c,t}$ and the vertical maps are induced by the maps $p$ and $q$ in \eqref{ePullbackSquare}. The first thing to check is that both vertical maps are homotopy equivalences. Note that $p|_W\colon W\to V_n$ is a homotopy equivalence (since it is a map between contractible spaces) and a Hurewicz fibration, so its pullback along any map is again a homotopy equivalence. The middle vertical map in \eqref{eComparingFibers} is its pullback along $r|_{V_n}\colon r^{-1}(V_n)\to V_n$.

For a space $X$ and subspace $A$ write $P_A X$ for the pathspace $\mathrm{Map}([0,1],\{0\};X,A)$, so for example $PZ=P_{\{z_0\}}Z$. Pulling back \eqref{ePullbackSquare} along the maps $P_W PZ\to PZ$ and $P_{V_n}Z\to Z$ (given by evaluation at $1$) gives another pullback square
\begin{equation}\label{ePullbackSquare2}
\centering
\begin{split}
\begin{tikzpicture}
[x=1mm,y=1mm]
\node (tl) at (0,12) {$\hofib_W(g)$};
\node (tr) at (30,12) {$P_W PZ$};
\node (bl) at (0,0) {$\hofib_{V_n}(r|_Z)$};
\node (br) at (30,0) {$P_{V_n}Z.$};
\draw[->] (tl) to (tr);
\draw[->] (bl) to (br);
\draw[->] (tl) to (bl);
\draw[->] (tr) to (br);
\node at (4,8) {$\lrcorner$};
\end{tikzpicture}
\end{split}
\end{equation}
The right-hand vertical map is a homotopy equivalence (since it is a map between contractible spaces) and a Hurewicz fibration, and therefore so is the left-hand vertical map, which is the right-hand vertical map of \eqref{eComparingFibers}.

Since $\hofib_W(g) \m \hofib_{V_n}(r|_Z)$ is a homotopy equivalence, there is a local coefficient system $\cG^\prime$ on $\hofib_{V_n}(r|_Z)$ which pulls back to $\cG$ on $\hofib_W(g)$. Moreover we may take $\cG^\prime$ to be the pullback of $\cG$ along a homotopy inverse, so $\cG^\prime$ is again in $\CC$. Since $V_n$ is part of a basis witnessing that $r$ is a Leray $\CC$-homology fibration, $r^{-1}(\gamma(1)) \hookrightarrow r^{-1}(V_n)$ is a $\cG^\prime$-homology isomorphism. Hence $g^{-1}(\gamma) \m g^{-1}(W)$ is also a $\cG^\prime$-homology isomorphism, i.e.\ a $\cG$-homology isomorphism, as required. This completes the verification that $g$ is a Leray $\CC$-homology fibration witnessed by the basis $\cW$.

Since $Z$ and $r^{-1}(Z)$ were assumed to be Hausdorff, so are the spaces $F$ and $PZ$. Hence the map $g\colon F\to PZ$, together with the basis $\cW$ for $PZ$ and the local coefficient system $j^*\cF$ on $F$, satisfies the conditions of Lemma \ref{contractibleCase}. This tells us that the inclusion $g^{-1}(\alpha_0)\emb F$ is an isomorphism on homology with coefficients pulled back from $j^*\cF$. But earlier we identified this inclusion with the map $i\colon r^{-1}(z_0)\to \hofib_{z_0}(r|_Z)$. Hence $i$ is an $\cF$-homology isomorphism, as required.
\end{proof}

\section{A twisted homology fibration criterion}\label{ssCriterion}

In this section we generalize Proposition 5.1 of \cite{Mc1} to give a criterion for a map $r\colon Y\to X$ to be a Serre $\CC$-homology fibration (and hence by Proposition \ref{SimpL} also a Leray $\CC$-homology fibration if it is locally stalk-like on $\CC$-homology). The criterion is the following theorem.

\begin{theorem}\label{strongcrit}
Let $X$ be a topological space with closed filtration $\{X_n\}_{n\in\bN}$, meaning that the $X_n$ are closed subsets of $X$ satisfying $X_{n-1}\subseteq X_n$, $X=\bigcup_{n\in\bN} X_n$ and each compact subset of $X$ is contained in some $X_n$. Let $r\colon Y \to X$ be a map and assume that each $X_n$ and $r^{-1}(X_n)$ is Hausdorff. Then $r$ is a Serre $\CC$-homology fibration if the following three conditions are satisfied\textup{:}
\begin{itemize2}
\item[\textup{(i)}] For each $n$ there is $X_{n-1}\subseteq U_n\subseteq X_n$, with $U_n$ open in $X_n$, such that $r$ is locally stalk-like on $\CC$-homology over $U_n$.
\item[\textup{(ii)}] The map $r$ is a Leray $\CC$-homology fibration on each difference $X_n \smallsetminus X_{n-1}$ and a Serre $\CC$-homology fibration on $X_0$.
\item[\textup{(iii)}] There are homotopies $h_t \colon U_n \m U_n$ and $H_t \colon r^{-1}(U_n) \m r^{-1}(U_n)$ satisfying\textup{:}
  \begin{itemize2}
  \item[\textup{(a)}] $h_0=id$, $h_t(X_{n-1}) \subseteq X_{n-1}$, $h_1(U_n) \subseteq X_{n-1}$\textup{;}
  \item[\textup{(b)}] $H_0 = id$, $r \circ H_t = h_t \circ r$\textup{;}
  \item[\textup{(c)}] for all $x \in U_n$, $H_1\colon r^{-1}(x) \m r^{-1}(h_1(x))$ induces an isomorphism on homology with coefficients in $\CC$ coming from $\hofib_{h_1(x)}(r)$.
  \end{itemize2}
\end{itemize2}
\end{theorem}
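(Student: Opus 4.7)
The plan is to prove by induction on $n$ that $r$ is a Serre $\CC$-homology fibration on $X_n$ for each $n \in \bN$. The base case $n=0$ is part of hypothesis (ii). Since each compact subset of $X$ lies in some $X_n$, for any $z \in X$ the space $\hofib_z(r)$ is the filtered colimit $\colim_m \hofib_z(r|_{X_m})$ along closed inclusions; as homology with local coefficients commutes with such colimits, the Serre property on every $X_n$ implies the Serre property on $X$.

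For the inductive step, fix $n \geq 1$ and assume $r$ is Serre on $X_{n-1}$. I first extend the Serre property to $U_n$ using the deformation hypothesis (iii). Condition (iii)(a) implies that the inclusion $X_{n-1} \hookrightarrow U_n$ is a homotopy equivalence with homotopy inverse $h_1\colon U_n \to X_{n-1}$: both composites are homotopic to the identity via $h_t$, using $h_t(X_{n-1})\subseteq X_{n-1}$ for the composite on $X_{n-1}$. An analogous argument using (iii)(b) applied to $H_t$ shows that $r^{-1}(X_{n-1}) \hookrightarrow r^{-1}(U_n)$ is also a homotopy equivalence. These equivalences assemble, for each $x \in U_n$, into a factorization (up to canonical homotopy)
\[
r^{-1}(x) \xrightarrow{H_1} r^{-1}(h_1(x)) \longrightarrow \hofib_{h_1(x)}(r|_{X_{n-1}}) \xrightarrow{\simeq} \hofib_{h_1(x)}(r|_{U_n}) \xrightarrow{\simeq} \hofib_x(r|_{U_n})
\]
of the canonical inclusion $r^{-1}(x) \to \hofib_x(r|_{U_n})$, where the last map uses the path $s \mapsto h_{1-s}(x)$. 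The first arrow is a $\CC$-homology isomorphism by (iii)(c), the second by the inductive hypothesis applied at $h_1(x) \in X_{n-1}$, and the last two are homotopy equivalences, so the canonical inclusion is a $\CC$-homology isomorphism and $r$ is Serre on $U_n$.

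Next I extend from $U_n$ to $X_n$ by gluing. Hypothesis (i) provides the locally-stalk-like property needed to upgrade the Serre property on $U_n$ to a Leray property via Proposition \ref{SimpL}, and hypothesis (ii) directly gives the Leray property on $V_n := X_n \smallsetminus X_{n-1}$. Since $U_n$ and $V_n$ are open in $X_n$ and cover it, the union of the two witnessing Leray bases is a basis of contractible open sets for $X_n$ that witnesses the Leray property on $X_n$; no compatibility issue arises between coefficient systems coming from the two pieces because the Leray definition uses the full homotopy fiber $\hofib_U(r)$ rather than a restriction. Finally, the Hausdorff hypotheses on $X_n$ and $r^{-1}(X_n)$ allow Proposition \ref{LimpS} to convert the Leray property on $X_n$ back to the Serre property, completing the inductive step.

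The main obstacle is the first substep, in which one must carefully track how a given local coefficient system on $\hofib_x(r)$ in $\CC$ pulls back along the chain of maps above. Closure of $\CC$ under pullback ensures that each pulled-back system lies in $\CC$, so that the inductive hypothesis and condition (iii)(c) apply to the appropriate pulled-back coefficient systems at each stage.
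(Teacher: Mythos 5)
Your proposal follows essentially the same inductive scheme as the paper: Serre on $X_0$ by (ii), extend to $U_n$ via the deformation in (iii) together with the inductive hypothesis on $X_{n-1}$, glue to the Leray property on $X_n\smallsetminus X_{n-1}$ from (ii) via Propositions \ref{SimpL} and \ref{LimpS}, and pass to the colimit using the compactness hypothesis. Two places deserve a bit more care than you give them, though neither changes the structure: (1) $\hofib_z(r)$ is not literally equal to $\colim_m \hofib_z(r|_{X_m})$ (these are defined via mapping spaces); one must argue that the natural map from the colimit is a \emph{weak} equivalence using that compact subsets of $X$ land in some $X_n$; and (2) applying (iii)(c) and the inductive hypothesis requires a coefficient system in $\CC$ on the full homotopy fiber $\hofib_{h_1(x)}(r)$, and ``closure under pullback'' alone doesn't produce this --- rather one uses the homotopy equivalence $\hofib_x(r)\simeq\hofib_{h_1(x)}(r)$ and pulls $\cF$ back along a homotopy inverse so that the resulting $\cF'$ lies in $\CC$ and restricts compatibly along your chain of maps.
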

\begin{remark}\label{rMinimumHypotheses}
These are the minimum necessary hypotheses to prove the theorem. A slightly stronger version of (i) is to assume that $r$ is locally stalk-like over each $X_n$. The property of being locally stalk-like passes to open subspaces, so this implies (i) and also means via Propositions \ref{SimpL} and \ref{LimpS} that in (ii) we need not distinguish between Leray and Serre $\CC$-homology fibrations.

In practice one normally checks conditions (i) and (ii) of Theorem \ref{strongcrit} by showing that $r$ is locally stalk-like over each $X_n$ and a fibration over each $X_n \smallsetminus X_{n-1}$.
\end{remark}
\begin{proof}
We will prove by induction that $r$ is a Serre $\CC$-homology fibration on $X_n$ for all $n$. This will prove the theorem by the following argument. For any $x\in X$ the map $r^{-1}(x)\to \hofib_x(r)$ factors as
\[
r^{-1}(x) \longrightarrow \colim_n \hofib_x(r|_{X_n}) \xrightarrow{\iota} \hofib_x(r)
\]
and we know that the first map is an isomorphism on $\CC$-homology by what we will prove below and the fact that homology with twisted coefficients commutes with colimits. It will therefore suffice to prove that $\iota$ is a weak equivalence.

Note that $\iota$ is clearly a continuous injection, and since each compact subset of $X$ is contained in some $X_n$ it is also a surjection. Another consequence of the fact that each compact subset of $X$ is contained in some $X_n$ is that each continuous map $K\to \hofib_x(r)$ from a compact space $K$ factors through the map $\hofib_x(r|_{X_n}) \to \hofib_x(r)$ for some $n$. This means that, although $\iota^{-1}$ may be discontinuous, the composition $\iota^{-1}\circ f$ for any continuous map $f\colon K\to \hofib_x(r)$ from a compact space $K$ \emph{is} continuous. So $\iota$ induces a bijection between the set of continuous maps from a compact space into $colim_n \hofib_x(r|_{X_n})$ and the set of continuous maps from a compact space into $\hofib_x(r)$. Hence $\iota$ is a weak equivalence.

It remains to prove that $r$ is a Serre $\CC$-homology fibration on $X_n$ for all $n$. For the base case $n=0$ this is assumed by (ii). So let $n\geq 0$ and assume by induction that $r$ is a Serre $\CC$-homology fibration on $X_n$; we will show that it is a Serre $\CC$-homology fibration on $X_{n+1}$. To do this, we will first prove that it is a Serre $\CC$-homology fibration on $U_{n+1}$. Fix $x \in U_{n+1}$ and consider the following commutative diagram:
\begin{center}
\begin{tikzpicture}
[x=1.4mm,y=1.2mm]
\node (tl) at (0,12) {$r^{-1}(x)$};
\node (tm) at (25,12) {$\hofib_{x}(r|_{U_{n+1}})$};
\node (tr) at (50,12) {$\hofib_x(r)$};
\node (bl) at (0,0) {$r^{-1}(h_1(x))$};
\node (bm) at (25,0) {$\hofib_{h_1(x)}(r|_{X_n})$};
\node (br) at (50,0) {$\hofib_{h_1(x)}(r).$};
\draw[->] (tl) to (tm);
\draw[->] (tm) to (tr);
\draw[->] (bl) to (bm);
\draw[->] (bm) to (br);
\draw[->] (tl) to node[left,font=\small]{$H_1$} (bl);
\draw[->] (tm) to node[right,font=\small]{$H_1^\prime$} (bm);
\draw[->] (tr) to node[right,font=\small]{$\simeq$} (br);
\end{tikzpicture}
\end{center}
Here $H_1^\prime \colon \hofib_{x}(r|_{U_{n+1}}) \m \hofib_{h_1(x)}(r|_{X_n})$ is the map induced on homotopy fibers by the maps $h_1$ and $H_1$. Note that conditions (iii)(a) and (iii)(b) imply that $H_1^\prime$ is a homotopy equivalence. Now fix a local coefficient system $\cF \to \hofib_x(r)$ in $\CC$; this pulls back from a local coefficient system $\F^\prime \to \hofib_{h_1(x)}(r)$ since the right-hand vertical map above is a homotopy equivalence. We may take $\cF^\prime$ to be the pullback of $\cF$ along a homotopy inverse, so $\cF^\prime$ is again in $\CC$. The map $H_1$ induces an isomorphism on homology with coefficients pulled back from $\cF^\prime$ by condition (iii)(c). The inclusion $r^{-1}(h_1(x)) \m \hofib_{h_1(x)}(r|_{X_n})$ induces an isomorphism on homology with coefficients pulled back from $\cF^\prime$ since $r$ is a Serre $\CC$-homology fibration on $X_n$ by our inductive hypothesis. Thus the inclusion $r^{-1}(x) \m  \hofib_{x}(r|_{U_{n+1}})$ induces an isomorphism on homology with coefficients pulled back from $\cF$, and so $r$ is a Serre $\CC$-homology fibration on $U_{n+1}$.

By (i) and Proposition \ref{SimpL}, $r$ is a Leray $\CC$-homology fibration on $U_{n+1}$. We have also assumed in (ii) that $r$ is a Leray $\CC$-homology fibration on $X_{n+1} \smallsetminus X_n$. It is clear that if a subspace $Z$ is the union of two open (in $Z$) subsets $V_1$ and $V_2$ then $r$ is a Leray $\CC$-homology fibration on $Z$ if it is on $V_1$ and on $V_2$. Since $X_{n+1} = (X_{n+1}\smallsetminus X_n) \cup U_{n+1}$ and $r$ is a Leray $\CC$-homology fibration on $X_{n+1}\smallsetminus X_n$ and $U_{n+1}$, the map $r$ is also a Leray $\CC$-homology fibration on $X_{n+1}$. Hence by Proposition \ref{LimpS} it is a Serre $\CC$-homology fibration on $X_{n+1}$, finishing the inductive step.
\end{proof}

\begin{remark}
McDuff's homology fibration criterion in \cite{Mc1} has some additional assumptions about spaces having nice local properties and having the homotopy type of CW complexes. These assumptions stem from her use of \v{C}ech cohomology and the need to equate \v{C}ech and singular cohomology. Since we use semi-simplicial spaces instead of sheaves to construct the spectral sequences used in proving Lemma \ref{contractibleCase} (which feeds into the proof of Theorem \ref{strongcrit}), these assumptions turn out not to be needed. This is similar to the fact that the paracompactness assumptions of Propositions 5 and 6 in \cite{MSe}, which are used to prove their group-completion theorem, are unnecessary, as pointed out in \cite[pp.\ 56--57]{Se} and \cite[pp.\ 109--110]{Mc3}.
\end{remark}

\section{The twisted group-completion theorem}\label{ss-group-completion}

In this section we prove the twisted (or $\CC$-) version of McDuff-Segal's group-completion theorem \cite[Proposition 2]{MSe}. We do not claim any originality here as the proofs closely follow the ideas of \cite{MSe}. We also discuss results of Randal-Williams \cite{Randal-Williams2013Groupcompletion} regarding applying the group-completion theorem to homotopy-commutative monoids. 

\paragraph{Geometric realizations.}
The geometric realization $\dnorm{Y_\bullet}$ of a semi-simplicial space $Y_\bullet$ can be defined inductively as follows: first $\dnorm{Y_\bullet}^{0} = Y_0$, then $\dnorm{Y_\bullet}^{n}$ is the pushout of the diagram
\begin{equation}\label{ePushoutDiagram}
\dnorm{Y_\bullet}^{n-1} \leftarrow \partial\Delta^n \times Y_n \to \Delta^n \times Y_n
\end{equation}
for each $n\geq 1$ and then $\dnorm{Y_\bullet}$ is the colimit of $\dnorm{Y_\bullet}^{n}$ as $n\to\infty$. The left-hand map in \eqref{ePushoutDiagram} is determined by the face maps of $Y_\bullet$. The right-hand map in \eqref{ePushoutDiagram} is a cofibration, so $\dnorm{Y_\bullet}^n$ is a model for the homotopy pushout of \eqref{ePushoutDiagram}. In fact, $\dnorm{Y_\bullet}^n$ is homeomorphic to the double mapping cylinder of \eqref{ePushoutDiagram}. The inclusions $\dnorm{Y_\bullet}^n \emb \dnorm{Y_\bullet}^{n+1}$ are NDR-pairs, hence cofibrations, so $\dnorm{Y_\bullet}$ is a model for the homotopy colimit of $\dnorm{Y_\bullet}^{n}$ as $n\to\infty$.

\paragraph{The $\CC$-group-completion theorem.}
Let $\cM$ be a topological monoid acting on a space $X$. Associated to this there is a map $p\colon E\cM \times_{\cM} X \to B\cM$ from the Borel construction to the classifying space of the monoid. Explicitly we take the following point-set model for this map. Let $E_n = \cM^n \times X$ and $B_n = \cM^n$ with $p_n\colon E_n \to B_n$ the projection onto the first factor. This is a map of semi-simplicial spaces, where $E_\bullet$ and $B_\bullet$ are given the usual face maps from the bar construction. We then take $p\colon E\cM \times_{\cM} X \to B\cM$ to be the geometric realization of this semi-simplicial map.

\begin{theorem}\label{pGroupCompletion}
Let $\cM$ be a Hausdorff, locally contractible monoid acting on a Hausdorff space $X$. Suppose that for all $m\in \cM$ the action map $m\cdot -\colon X\to X$ is a $\CC$-homology equivalence. Then $p$ is a Serre $\CC$-homology fibration.
\end{theorem}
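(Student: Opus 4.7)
The plan is to apply Theorem~\ref{strongcrit} (the twisted homology fibration criterion) to $p$, using the skeletal filtration $X_n := \dnorm{B_\bullet}^n$ of $B\cM$. Closedness of the filtration and the compact-set condition are standard for geometric realizations of semi-simplicial spaces, and the Hausdorff hypotheses on $X_n$ and $p^{-1}(X_n) = \dnorm{E_\bullet}^n$ follow from $\cM$ and $X$ being Hausdorff (with local contractibility of $\cM$ supplying the NDR property for the skeletal inclusions). Over each $X_n$, the map $p$ is, up to the standard identifications, a union of trivial projections $\mathring\Delta^k \times \cM^k \times X \to \mathring\Delta^k \times \cM^k$ for $k \leq n$, so $p$ is locally stalk-like over $X_n$ with stalk $X$, giving condition (i). Restricted to $X_n\smallsetminus X_{n-1}$ the map is literally such a product bundle, which gives the Leray-fibration half of (ii); the base case $X_0 = \point$ is immediate since $p^{-1}(X_0) = X$ and the homotopy fiber over the unique point is also $X$.

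For condition (iii), let $V \subset \Delta^n$ be the complement of a small closed ball around the barycenter, and define $U_n \subset X_n$ to be the union of $X_{n-1}$ with the image of $V \times \cM^n$ under the characteristic map. Radial projection from the barycenter gives a retraction $\rho\colon V \to \partial\Delta^n$ which fixes $\partial\Delta^n$ pointwise. The straight-line homotopy
\[
h_s(t;m) := \bigl((1-s)t + s\rho(t);\, m\bigr) \quad\text{on }V\times\cM^n,
\]
extended as the identity on $X_{n-1}$, descends to a continuous deformation retraction of $U_n$ onto $X_{n-1}$. The prescription $H_s(t;m;x) := ((1-s)t + s\rho(t);\, m;\, x)$ on $V\times\cM^n\times X$, again extended as the identity on $p^{-1}(X_{n-1})$, covers $h_s$ and is well-defined on $p^{-1}(U_n)$ because both formulas restrict to the identity on the overlap $\partial\Delta^n\times\cM^n\times X$ (using $\rho|_{\partial\Delta^n} = \id$).

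To verify (iii)(c), fix $x \in U_n\smallsetminus X_{n-1}$ represented by $(t; m_1,\ldots,m_n) \in V\times\cM^n$, and suppose that $\rho(t)$ lies in the relative interior of the $i$-th face $\{t_i = 0\}$. Under the face identification $d_i$ of $\dnorm{E_\bullet}$, the map $H_1\colon p^{-1}(x)\cong X \to p^{-1}(h_1(x))\cong X$ is the identity for $i>0$ (since $d_i$ on $E_\bullet$ leaves the $X$-coordinate untouched) and is left multiplication by $m_1$ for $i=0$ (the only face map involving the action). The former is trivially a $\CC$-homology equivalence, and the latter is so by the hypothesis on action maps; the case when $\rho(t)$ lies on a deeper-codimension face reduces similarly to a composition of such maps.

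The main technical difficulty is the point-set bookkeeping: verifying well-definedness and continuity of $h_s$ and $H_s$ on the quotient spaces $U_n$ and $p^{-1}(U_n)$, and in particular controlling how the $s = 1$ limit interacts with the face identifications in $\dnorm{B_\bullet}$ and $\dnorm{E_\bullet}$ --- it is here, via the $d_0$ identification on $\dnorm{E_\bullet}$, that the hypothesis on action maps becomes essential. Once these checks are in place, Theorem~\ref{strongcrit} yields that $p$ is a Serre $\CC$-homology fibration.
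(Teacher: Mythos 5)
Your plan --- apply Theorem~\ref{strongcrit} to $p$ with the skeletal filtration $X_n = \dnorm{B_\bullet}^n$, use the barycenter-removed collar $U_n$ and radial retraction to supply the homotopies, and feed the face-map identifications into (iii)(c) --- matches the paper's final step (Step~3 of Proposition~\ref{pGroupCompletion2}). The homotopy construction and the observation that the hypothesis on the action enters at the face identification are correct. But there is a genuine gap in your verification of condition~(i).

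You assert that ``over each $X_n$, the map $p$ is, up to the standard identifications, a union of trivial projections $\mathring\Delta^k\times\cM^k\times X\to\mathring\Delta^k\times\cM^k$ for $k\leq n$, so $p$ is locally stalk-like over $X_n$.'' This is true only as a statement about the underlying sets: $X_n$ carries the quotient topology, not the disjoint-union topology. Any basic open neighborhood $U$ of a point lying in a cell of positive codimension in $X_n$ necessarily contains portions of the higher-dimensional cells attached to it, and across those attaching maps the fibers of $p$ are related by the action maps $m\cdot-$ on $X$, not by the identity. To verify that $r^{-1}(z_U)\hookrightarrow r^{-1}(U)$ is a $\CC$-homology isomorphism for such a $U$, you must track the fiber through these identifications, which is precisely where the hypothesis on the action enters for condition~(i) --- not only for condition~(iii)(c) as you indicate. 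The paper does not take the locally-stalk-like property for granted: it first proves Lemma~\ref{lStrongLerayHF1}, which constructs an explicit basis for a double mapping cylinder $B_1\cup_{f_1}(B_0\times[0,1])\cup_{f_2}B_2$ (one must take care here, as the ``obvious'' basis is not actually a basis) and shows that the glued map is a Leray $\CC$-homology fibration when the levelwise maps are and the gluing maps induce $\CC$-homology equivalences on fibers. It then runs an induction on $n$ (Step~2 of Proposition~\ref{pGroupCompletion2}) to conclude that $\dnorm{p_\bullet}$ is a Leray $\CC$-homology fibration over each skeleton $X_n$, and only then deduces the locally-stalk-like property as a consequence. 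Without this inductive double-mapping-cylinder argument, or something functionally equivalent, your claim about condition~(i) is unsupported, and the argument does not close. (Your remark that the ``main technical difficulty'' is the continuity of $h_s$, $H_s$ misdiagnoses where the real work lies; that part is comparatively routine.)

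A secondary, cosmetic point: with the model $E_n=\cM^n\times X$ and the bar construction face maps, it is $d_n$ (not $d_0$) that acts on the $X$-factor, via $m_n\cdot-$. This is a convention choice and does not affect the logic.
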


Compared to Proposition 2 of \cite{MSe}, the hypothesis and conclusion have both been strengthened to all local coefficient systems in a fixed collection $\CC$ which is closed under pullbacks. In particular, when $\CC$ is the collection of all trivial coefficient systems this is precisely Proposition 2 of \cite{MSe}.

\paragraph{Homotopy-commutative monoids.}
We now discuss an application of the group-completion theorem to the homology of topological monoids. Let $\cM$ be a topological monoid with $\pi_0(\cM)=\bN$. (One could work in more generality, but we restrict to this case for ease of notation and because it is the only case that will be necessary in \cite{MP}.) Denote its components by $\cM_k$ and choose an element $m\in \cM_1$. We then form $\cM_\infty$ as the mapping telescope of the sequence $\cM\to \cM\to \cM\to\cdots$ where each map is right-multiplication by $m$. There is then an induced left-action of $\cM$ on $\cM_\infty$.

If we now assume that $\cM$ is homotopy-commutative, then for each $m^\prime\in \cM$ this action $m^\prime \cdot -\colon \cM_\infty \to \cM_\infty$ is a trivial homology equivalence (i.e.\ with trivial $\bZ$ coefficients). To see this: say $m^\prime \in \cM_k$. Then the the map we are interested in is the map induced on $\cM_\infty$ by the vertical maps in the diagram
\begin{equation}\label{eTriangles}
\centering
\begin{split}
\begin{tikzpicture}
[x=1mm,y=1.5mm]
\node (t1) at (0,10) {$\cdots$};
\node (t2) at (20,10) {$\cM$};
\node (t3) at (60,10) {$\cM$};
\node (t4) at (80,10) {$\cdots$};
\node (b1) at (0,0) {$\cdots$};
\node (b2) at (20,0) {$\cM$};
\node (b3) at (60,0) {$\cM$};
\node (b4) at (80,0) {$\cdots$};
\draw[->] (t1) to (t2);
\draw[->] (t2) to node[above,font=\small]{$-\cdot m^k$} (t3);
\draw[->] (t3) to (t4);
\draw[->] (b1) to (b2);
\draw[->] (b2) to node[below,font=\small]{$-\cdot m^k$} (b3);
\draw[->] (b3) to (b4);
\draw[->] (t2) to node[left,font=\small]{$m^\prime \cdot -$} (b2);
\draw[->] (t3) to node[right,font=\small]{$m^\prime \cdot -$} (b3);
\draw[->] (b2) to node[anchor=south,font=\small]{$\mathrm{id}$} (t3);
\end{tikzpicture}
\end{split}
\end{equation}
in which the triangles commute up to homotopy. This splitting into triangles induces a factorization on homology which implies that the induced map on the mapping telescope $\cM_\infty$ is a homology equivalence.

Note that $E\cM \times_{\cM} \cM_\infty = \dnorm{\cM^\bullet \times \cM_\infty}$ is the homotopy colimit of infinitely many copies of $\dnorm{\cM^\bullet \times \cM}$. The latter space is a model for $E\cM$, so it is weakly contractible, and therefore so is $E\cM \times_{\cM} \cM_\infty$. Hence the homotopy fiber of the map $p$ over any point is weakly equivalent to $\Omega B\cM$. Applying the group-completion theorem \cite[Proposition 2]{MSe} we therefore obtain a homology equivalence
\begin{equation}\label{eGroupCompletion}
\cM_\infty \cong \mathit{fib}(p) \longrightarrow \hofib(p) \simeq_{\mathsf{w}} \Omega B\cM
\end{equation}
(this is essentially Proposition 1 of \cite{MSe}). We would like to know that this map is in fact a \emph{twisted} homology equivalence, in other words an acyclic map.

Similarly to the discussion above one can show that, for homotopy-commutative monoids $\cM$, the maps $m^\prime \cdot -\colon \cM_\infty \to \cM_\infty$ are surjective on homology with coefficients in any local coefficient system on $\cM_\infty$. But the argument fails for injectivity,\footnote{To apply the argument, one needs to produce a factorization into triangles as in \eqref{eTriangles}, but for spaces equipped with local coefficient systems, thought of as bundles of abelian groups. For the bottom-right triangle (corrsponding to surjectivity) this can be done since given homotopic maps $f,g\colon X\rightrightarrows Y$ and a bundle $\cF$ over $Y$, one can factor the pullback along $f$ as a bundle map covering $\mathrm{id}_X$ followed by the pullback along $g$. However, one cannot in general factor it as the pullback along $g$ followed by a bundle map covering $\mathrm{id}_Y$ -- this is the problem one needs to solve in the top-left triangle, corresponding to injectivity.} and indeed it is in general \emph{not} true that homotopy-commutative monoids $\cM$ act on their mapping telescopes $\cM_\infty$ by acyclic maps; see Remark 2.6 of \cite{Randal-Williams2013Groupcompletion} for an example. However, it has been proved in \cite[\S 2.4]{Randal-Williams2013Groupcompletion} that the maps $m^\prime \cdot -\colon \cM_\infty \to \cM_\infty$ are nevertheless injective on homology with coefficients in any \emph{abelian} local coefficient system on $\cM_\infty$. Hence we may apply the $\CC$-group-completion theorem (Theorem \ref{pGroupCompletion}), with $\CC$ the subcategory $\mathfrak{Ab}$ of abelian local coefficient systems, to obtain:
\begin{corollary}\label{cGroupCompletion}
For a homotopy-commutative monoid $\cM$ the map \eqref{eGroupCompletion} is acyclic.
\end{corollary}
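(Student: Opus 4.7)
The proof amounts to combining the $\CC$-group-completion theorem (Theorem \ref{pGroupCompletion}) with the input from \cite[\S 2.4]{Randal-Williams2013Groupcompletion} cited just above the statement, followed by a short bootstrap from abelian to arbitrary twisted coefficients. The paragraph preceding the corollary already assembles essentially every ingredient, and the plan is simply to organise them.

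First, I would verify the hypotheses of Theorem \ref{pGroupCompletion} with $\CC = \mathfrak{Ab}$ for the action of $\cM$ on $\cM_\infty$. The surjectivity half (any local coefficient system) follows from the triangle factorisation of \eqref{eTriangles}, as already noted, and the injectivity half for abelian local coefficients is exactly the content of the Randal-Williams result. Together these say that $m' \cdot - \colon \cM_\infty \to \cM_\infty$ is an $\mathfrak{Ab}$-homology equivalence for every $m' \in \cM$. Applying Theorem \ref{pGroupCompletion} then tells us that the Borel construction map $p$ is an abelian Serre homology fibration; unpacking this, the inclusion of the strict fibre $p^{-1}(b) \cong \cM_\infty$ into $\hofib_b(p)$ is an $\cF$-homology isomorphism for every abelian local system $\cF$ on $\hofib_b(p)$. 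Since $E\cM \times_{\cM} \cM_\infty$ is weakly contractible (as noted in the paragraph preceding the corollary), we have $\hofib_b(p) \simeq_{\mathrm{w}} \Omega B\cM$, and this inclusion is precisely the map of \eqref{eGroupCompletion}.

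The final step is to upgrade the abelian homology equivalence into an acyclic map. For this I would use that $\Omega B\cM$ is a loop space, so the Eckmann--Hilton argument forces the fundamental group of each of its path components to be abelian. On a space whose fundamental groups are all abelian, \emph{every} local coefficient system is abelian in the sense of \S\ref{ssDef}, since any homomorphism out of an abelian group factors through an abelian group (namely itself). Consequently the $\mathfrak{Ab}$-homology equivalence $\cM_\infty \to \Omega B\cM$ is automatically a $\TT$-homology equivalence, i.e.\ acyclic, as required. There is no real obstacle once the Randal-Williams result is taken as a black box; the only mildly subtle point is this final upgrade from abelian to twisted coefficients via the loop-space structure on $\Omega B\cM$.
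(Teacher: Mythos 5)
Your argument correctly identifies the main logical moves and matches the paper's strategy for the substantive part of the argument: apply Theorem \ref{pGroupCompletion} with $\CC=\mathfrak{Ab}$ using the Randal-Williams injectivity input, identify the homotopy fibre of $p$ with $\Omega B\cM$, and upgrade from an abelian homology equivalence to an acyclic map by observing that $\Omega B\cM$, being a loop space, has abelian $\pi_1$ on each component and hence admits only abelian local coefficient systems. That upgrade is exactly what the paper does, and your Eckmann--Hilton justification is fine.

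However, there is a genuine gap: Theorem \ref{pGroupCompletion} requires $\cM$ to be Hausdorff and locally contractible, and $X$ to be Hausdorff, whereas Corollary \ref{cGroupCompletion} imposes no point-set hypotheses on $\cM$. You cannot apply the theorem directly to an arbitrary homotopy-commutative topological monoid. The paper deals with this by first replacing $\cM$ with $\snorm{S_\bullet(\cM)}$, the thin geometric realization of its singular simplicial set. Since $S_\bullet$ is a right adjoint it preserves limits, and thin geometric realization preserves finite limits, so $\snorm{S_\bullet(-)}$ preserves finite products; consequently $\snorm{S_\bullet(\cM)}$ is again a homotopy-commutative topological monoid and is additionally Hausdorff and locally contractible (indeed a CW complex). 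The natural map $\snorm{S_\bullet(\cM)}\to\cM$ is a weak equivalence, so acyclicity of \eqref{eGroupCompletion} for $\snorm{S_\bullet(\cM)}$ implies it for $\cM$. Without some such reduction your invocation of Theorem \ref{pGroupCompletion} is not justified, and the proof is incomplete. Once that step is inserted, the rest of your argument goes through as written.
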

\begin{proof}
First, let $\snorm{S_\bullet(\cM)}$ be the thin geometric realization of the singular simplicial set of $\cM$. The functor $S_\bullet$ preserves all limits (since it is a right adjoint), and thin geometric realization preserves finite limits, so $\snorm{S_\bullet(-)}$ preserves finite products and therefore $\snorm{S_\bullet(\cM)}$ is again a homotopy-commutative monoid, which is now additionally Hausdorff and locally contractible. There is a natural map $\snorm{S_\bullet(\cM)}\to \cM$ which is a weak equivalence, so acyclicity of the map \eqref{eGroupCompletion} for $\cM$ replaced by $\snorm{S_\bullet(\cM)}$ will imply acyclicity of the map \eqref{eGroupCompletion} itself. So we may assume that $\cM$ is Hausdorff and locally contractible.

By the discussion above the action of $\cM$ on $\cM_\infty$ satisfies the hypothesis of Theorem \ref{pGroupCompletion} for $\CC=\mathfrak{Ab}$, so $p$ is an abelian Serre homology fibration. Hence (taking fibers and homotopy fibers over any point in $B\cM$) the map \eqref{eGroupCompletion} is an abelian homology equivalence. But all local coefficient systems on $\Omega B\cM$ are abelian, so it is a twisted homology equivalence, i.e.\ an acyclic map.
\end{proof}
By the classification of acyclic maps out of a given space (see for example \cite[Theorem IX.2.3]{AdemMilgram2004}) this means that $\Omega B\cM$ is weakly equivalent to the Quillen plus-construction of $\cM_\infty$ with respect to some perfect normal subgroup of $\pi_1(\cM_\infty)$. Since $\Omega B\cM$ has abelian $\pi_1$, this subgroup must be the commutator $[\pi_1(\cM_\infty),\pi_1(\cM_\infty)]$, so in particular we deduce that this commutator is perfect (this fact is proved directly in Proposition 3.1 of \cite{Randal-Williams2013Groupcompletion}). We also deduce that $\cM_{\infty}^+$ (where $(-)^+$ denotes plus-construction with respect to the maximal perfect subgroup) is a \emph{simple} space:
\begin{corollary}\label{cGroupCompletion2}
For a homotopy-commutative monoid $\cM$ the space $\cM_{\infty}^+$ is simple.
\end{corollary}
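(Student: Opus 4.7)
The plan is to identify $\cM_\infty^+$ with $\Omega B\cM$ up to weak equivalence and then invoke the fact that loop spaces are simple. The discussion preceding the corollary already establishes that $\Omega B\cM$ is the Quillen plus-construction of $\cM_\infty$ with respect to the perfect normal subgroup $N := [\pi_1(\cM_\infty),\pi_1(\cM_\infty)]$ of $\pi_1(\cM_\infty)$, so the task reduces to comparing $N$ with the maximal perfect subgroup.

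Let $P$ denote the maximal perfect subgroup of $\pi_1(\cM_\infty)$, so that $\cM_\infty^+$ is by definition the plus-construction of $\cM_\infty$ with respect to $P$. The inclusion $N \subseteq P$ holds automatically since $N$ is perfect. For the reverse inclusion, the quotient $P/N$ is a homomorphic image of the perfect group $P$ and hence itself perfect, while it embeds into $\pi_1(\cM_\infty)/N \cong \pi_1(\Omega B\cM)$, which is abelian because $\Omega B\cM$ is a loop space. Since a perfect subgroup of an abelian group must be trivial, $P = N$, and consequently $\cM_\infty^+$ is weakly equivalent to $\Omega B\cM$.

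It then suffices to observe that $\Omega B\cM$ is a loop space and hence an H-space, and every path component of an H-space is simple: the fundamental group is abelian, and it acts trivially on all higher homotopy groups. There is no substantial obstacle here beyond the content of Corollary \ref{cGroupCompletion}; the only new ingredient is the short group-theoretic observation that, thanks to $\pi_1(\Omega B\cM)$ being abelian, the maximal perfect subgroup of $\pi_1(\cM_\infty)$ coincides with the commutator subgroup $N$.
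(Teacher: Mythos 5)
Your proposal is correct and follows essentially the same route as the paper: the paragraph preceding the corollary already identifies $\Omega B\cM$ with the plus-construction of $\cM_\infty$ along the commutator subgroup and observes that this commutator must therefore be perfect, so that $\cM_\infty^+ \simeq_{\mathrm{w}} \Omega B\cM$ is a loop space and hence simple. The only stylistic difference is your detour through $P/N$ to show the maximal perfect subgroup $P$ equals $N=[\pi_1,\pi_1]$; one can also just note $P=[P,P]\subseteq[\pi_1,\pi_1]=N$ directly.
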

It now remains to prove Theorem \ref{pGroupCompletion}. It will follow easily from the more general fact:
\begin{proposition}\label{pGroupCompletion2}
Let $p_\bullet\colon E_\bullet\to B_\bullet$ be a map of semi-simplicial spaces which are levelwise Hausdorff, i.e.\ each $E_n$ and $B_n$ is Hausdorff. Suppose that each level $p_n\colon E_n\to B_n$ is a Leray $\CC$-homology fibration and for each face map $d_j\colon B_n\to B_{n-1}$ and element $b\in B_n$, the map
\begin{equation}\label{eRestrictionOfFaceMap}
d_j|_{p_n^{-1}(b)}\colon p_n^{-1}(b) \longrightarrow p_{n-1}^{-1}(d_j(b))
\end{equation}
is a $\CC$-homology equivalence. Then the induced map $\dnorm{p_\bullet} \colon \dnorm{E_\bullet} \to \dnorm{B_\bullet}$ of geometric realizations is a Serre $\CC$-homology fibration.
\end{proposition}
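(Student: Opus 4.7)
The plan is to apply Theorem \ref{strongcrit} with the skeletal filtration $X_n := \dnorm{B_\bullet}^n$ of $\dnorm{B_\bullet}$. Each $X_n$ is closed and Hausdorff (the latter from levelwise Hausdorffness of $B_\bullet$), $\dnorm{B_\bullet} = \bigcup_n X_n$, and a standard CW-type argument shows that every compact subset of $\dnorm{B_\bullet}$ lies in some $X_n$; likewise for $\dnorm{p_\bullet}^{-1}(X_n) = \dnorm{E_\bullet}^n$. Since $B_\bullet$ has no degeneracies, $X_n\smallsetminus X_{n-1}$ is canonically homeomorphic to $\mathring{\Delta}^n\times B_n$, and $\dnorm{p_\bullet}$ restricts there to $\mathrm{id}\times p_n$, which is a Leray $\CC$-homology fibration; on $X_0=B_0$ it restricts to $p_0$, a Leray and hence (by Proposition \ref{LimpS}) Serre $\CC$-homology fibration. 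This verifies condition (ii).

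For conditions (i) and (iii), I choose an open collar $N_\epsilon\subseteq\Delta^n$ of $\partial\Delta^n$ together with a deformation retract $\rho_t\colon N_\epsilon\to N_\epsilon$ onto $\partial\Delta^n$ which is \emph{simplex-wise}: if $s\in N_\epsilon$ lies in the open cone over the interior of the face $d_j(\Delta^{n-1})$, then $\rho_1(s)\in d_j(\Delta^{n-1})$. Let $\Phi\colon\Delta^n\times B_n\to X_n$ be the characteristic map and set $U_n := X_{n-1}\cup\Phi(N_\epsilon\times B_n)$, an open neighborhood of $X_{n-1}$ in $X_n$. Define $h_t\colon U_n\to U_n$ by the identity on $X_{n-1}$ and $h_t([s,b])=[\rho_t(s),b]$ on $\Phi(N_\epsilon\times B_n)$, and lift to $H_t$ on $\dnorm{p_\bullet}^{-1}(U_n)$ by the analogous formula with $e\in E_n$ in place of $b$. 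Conditions (iii)(a) and (iii)(b) are immediate. For (iii)(c), if $x=[s,b]$ with $s$ in the cone over $d_j$, then $h_1(x)=[s',d_j(b)]$ for some $s'\in\Delta^{n-1}$, and $H_1$ restricts on the fiber $\dnorm{p_\bullet}^{-1}(x)=p_n^{-1}(b)$ to $d_j|_{p_n^{-1}(b)}$, a $\CC$-homology equivalence by hypothesis.

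For condition (i), I prove by induction on $n$ that $\dnorm{p_\bullet}$ is locally stalk-like on $\CC$-homology over the whole of $X_n$; taking the subfamily of basics contained in $U_n$ then witnesses (i). The base case $n=0$ uses the Leray structure of $p_0$. In the inductive step, at a point of $X_n\smallsetminus X_{n-1}$ I use product neighborhoods $V\times W$ with $V$ a ball in $\mathring{\Delta}^n$ and $W$ from a Leray basis of $p_n$, with designated point $(s,b_W)$. At a point $x\in X_{n-1}$ with inductive basic neighborhood $U$ of $x$ in $X_{n-1}$ and designated $x_U$, I take the thickening $\tilde U := h_1^{-1}(U)\cap U_n$, which is open in $X_n$ and contains $U$. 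Since $h_1(\tilde U)\subseteq U$ and $H_t$ fixes $\dnorm{p_\bullet}^{-1}(X_{n-1})$ pointwise, the lifted homotopy $H_t$ restricts to a strong deformation retract of $\dnorm{p_\bullet}^{-1}(\tilde U)$ onto $\dnorm{p_\bullet}^{-1}(U)$, so the inclusion is a homotopy equivalence. Composing with the inductive $\CC$-homology equivalence $\dnorm{p_\bullet}^{-1}(x_U)\to\dnorm{p_\bullet}^{-1}(U)$ furnishes the designated point for $\tilde U$.

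The main technical obstacle will be the point-set care needed to choose the simplex-wise collar retract $\rho_t$ consistently across faces of all dimensions, and to verify that the family $\{\tilde U\}\cup\{V\times W\}$ really forms a topology basis for $X_n$ (rather than merely a neighborhood basis at each point). Once this is in place, together with the homotopy-inverse property of $H_t$ (which relies on $\rho_t$ fixing $\partial\Delta^n$ pointwise), Theorem \ref{strongcrit} yields the conclusion directly.
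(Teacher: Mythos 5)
Your overall strategy---apply Theorem \ref{strongcrit} to the skeletal filtration---matches the paper, and your handling of conditions (ii) and (iii) (open collar $U_n$, simplex-wise retraction $\rho_t$ inducing $h_t$ and $H_t$, identifying $H_1$ on a fiber with the restricted face map) is essentially the same as the paper's Step 3. Where you diverge, and diverge genuinely, is in condition (i). The paper first proves by a separate induction (Lemma \ref{lStrongLerayHF1}, the double-mapping-cylinder lemma) that each skeletal map $\dnorm{p_\bullet}^n$ is itself a full Leray $\CC$-homology fibration, and then reads off the locally-stalk-like property as an immediate corollary and restricts it to $U_n$. You bypass Lemma \ref{lStrongLerayHF1} entirely and instead attempt a direct induction on locally-stalk-likeness, manufacturing basics over $X_{n-1}$ by thickening inductive basics $U$ to $\tilde U = h_1^{-1}(U)\cap U_n$ and using the $H_t$-deformation retract to transport the fiber comparison. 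This is a more economical plan, since local stalk-likeness is weaker than the Leray condition and is all that (i) asks for. What the paper's route buys is the clean, reusable statement that $\dnorm{p_\bullet}$ is already Leray on each skeleton, proved by a lemma that they deliberately keep independent of the criterion so that the double-mapping-cylinder result does not circularly depend on the machinery of Section 2.

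There are two genuine soft spots in your argument, one of which you flag and one you don't. The one you flag is real and is not merely a matter of ``taste'': the sets $\tilde U = h_1^{-1}(U)\cap U_n$ all have fixed thickness $\varepsilon$ in the collar direction, so they cannot refine an arbitrary open set near $X_{n-1}$; the collection $\{\tilde U\}\cup\{V\times W\}$ is not even a neighbourhood basis at points of $X_{n-1}$, let alone a topology basis. The fix is to use a nested family of collars $N_\delta$ for $0<\delta\le\varepsilon$ with compatible retractions $\rho^\delta_t$, and to take all the resulting $\tilde U^\delta$; note that the $\delta$ used to build the basis witnessing (i) need not coincide with the $\varepsilon$ used to fix the single homotopy $H_t$ for (iii), so this decoupling is available. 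The second soft spot: you assert that $H_t$ restricts to a strong deformation retraction of $\dnorm{p_\bullet}^{-1}(\tilde U)$ onto $\dnorm{p_\bullet}^{-1}(U)$. For $H_t$ to even preserve $\dnorm{p_\bullet}^{-1}(\tilde U)$ you need $h_t(\tilde U)\subseteq\tilde U$, which (given $\tilde U = h_1^{-1}(U)$) amounts to $h_1\circ h_t = h_1$, i.e.\ $\rho_1\circ\rho_t = \rho_1$. This is a strictly stronger requirement than the ``simplex-wise'' condition you impose on $\rho_t$; a radial retraction has it, but you should state it explicitly since the deformation-retract claim fails without it. With those two points addressed (and a routine but necessary check that the coefficient systems pulled back from $\hofib_{\tilde U}(\dnorm{p_\bullet})$ restrict to ones pulled back from $\hofib_U(\dnorm{p_\bullet}^{n-1})$, so the inductive hypothesis actually applies), your argument should close.
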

In applying Theorem \ref{pGroupCompletion} to obtain Corollaries \ref{cGroupCompletion} and \ref{cGroupCompletion2} we were interested in the fact that $p\colon E\cM \times_{\cM} X \to B\cM$ is an abelian \emph{Serre} homology fibration. However the proof of the next lemma, which is needed to prove Proposition \ref{pGroupCompletion2}, depends on the \emph{Leray} version of this notion; this is one of the reasons why we are led to consider both kinds of abelian homology fibrations.
\begin{lemma}\label{lStrongLerayHF1}
Suppose that we have a diagram\textup{:}
\begin{equation}\label{eStrongLerayHF1}
\centering
\begin{split}
\begin{tikzpicture}
[x=1mm,y=1mm]
\node (tl) at (0,15) {$E_1$};
\node (tm) at (30,15) {$E_0$};
\node (tr) at (60,15) {$E_2$};
\node (bl) at (0,0) {$B_1$};
\node (bm) at (30,0) {$B_0$};
\node (br) at (60,0) {$B_2$};
\draw[->] (tl) to node[right,font=\small]{$p_1$} (bl);
\draw[->] (tm) to node[right,font=\small]{$p_0$} (bm);
\draw[->] (tr) to node[right,font=\small]{$p_2$} (br);
\draw[->] (tm) to node[above,font=\small]{$g_1$} (tl);
\draw[->] (bm) to node[below,font=\small]{$f_1$} (bl);
\draw[->] (tm) to node[above,font=\small]{$g_2$} (tr);
\draw[->] (bm) to node[below,font=\small]{$f_2$} (br);
\end{tikzpicture}
\end{split}
\end{equation}
in which $p_i$ is a Leray $\CC$-homology fibration for $i=0,1,2$, and for all $b\in B_0$ the restriction $p_0^{-1}(b)\to p_i^{-1}(f_i(b))$ of $g_i$ is a $\CC$-homology equivalence for $i=1,2$.

Then the map $p\colon E\to B$ induced by taking double mapping cylinders levelwise -- in other words $E = E_1 \cup_{g_1} (E_0 \times [0,1]) \cup_{g_2} E_2$ etc.\ -- is also a Leray $\CC$-homology fibration.
\end{lemma}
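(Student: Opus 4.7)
The plan is to construct an explicit basis $\U$ for the topology of the double mapping cylinder $B = B_1 \cup_{f_1} (B_0\times[0,1]) \cup_{f_2} B_2$ consisting of contractible open sets, and to verify Definition \ref{dLerayHF} on this basis directly. For $i=0,1,2$, fix a basis $\U_i$ witnessing that $p_i$ is a Leray $\CC$-homology fibration. The basis $\U$ will consist of three families: interior cylinder sets $V\times(s,t)$ with $V\in\U_0$ and $0<s<t<1$; left-end collar neighborhoods $\widetilde{U}$ obtained by attaching a small open collar to some $U\in\U_1$; and symmetric right-end collar neighborhoods built from $\U_2$. Each such set deformation retracts onto a basic set of some $B_i$ and is therefore contractible.

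For each $W\in\U$, each $b\in W$, and each local coefficient system $\cG$ on $\hofib_W(p)$ in $\CC$, I need to show that the inclusion $p^{-1}(b)\hookrightarrow p^{-1}(W)$ induces an isomorphism on homology with coefficients pulled back from $\cG$. The interior case reduces immediately to the Leray property of $p_0$, since $p^{-1}(V\times(s,t))\cong p_0^{-1}(V)\times(s,t)$. For a left-end collar $\widetilde{U}$ associated to $U\in\U_1$, the deformation retraction $\widetilde{U}\simeq U$ lifts to a deformation retraction $p^{-1}(\widetilde{U})\simeq p_1^{-1}(U)$ and induces a homotopy equivalence $\hofib_U(p_1)\simeq\hofib_{\widetilde{U}}(p)$ along which $\cG$ pulls back to a system in $\CC$. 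When $b\in U$, so $p^{-1}(b)=p_1^{-1}(b)$, the inclusion factors through $p_1^{-1}(U)$ and is a $\CC$-homology equivalence by the Leray property of $p_1$. When $b=(b_0,t)$ lies in the collar, so $p^{-1}(b)\cong p_0^{-1}(b_0)$, I homotope the inclusion by sliding $t\to 0$ into the composite
\[
p_0^{-1}(b_0)\xrightarrow{g_1|}p_1^{-1}(f_1(b_0))\hookrightarrow p_1^{-1}(U)\hookrightarrow p^{-1}(\widetilde{U}),
\]
whose first arrow is a $\CC$-homology equivalence by the hypothesis on the fiberwise restrictions of $g_i$, whose second is so by the Leray property of $p_1$, and whose third is a homotopy equivalence. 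The right-end case is symmetric.

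I expect the main obstacle to be the construction of the basis itself: the naive choice $U\cup (f_1^{-1}(U)\times[0,s))$ need not be open in $B$ for a fixed constant collar width $s$, so the collar width should vary continuously with $u\in U$ and shrink near the boundary of $U$, and one must verify carefully that the resulting family of sets is genuinely a basis for $B$. Once this is arranged, the chain of $\CC$-homology equivalences above, combined with the closure of $\CC$ under pullbacks (which ensures that every intermediate pulled-back coefficient system lies in $\CC$), assembles to give the required conclusion that $\U$ witnesses $p$ as a Leray $\CC$-homology fibration.
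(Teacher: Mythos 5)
Your proposal follows essentially the same route as the paper's proof: build an explicit basis for the double mapping cylinder out of interior cylinder sets and end collar neighborhoods, then verify the Leray condition case by case, using the Leray property of $p_0$ for interior points, the Leray property of $p_1$ (resp.\ $p_2$) for points of $B_1$ (resp.\ $B_2$), and the hypothesis on the fiberwise restrictions of $g_i$ together with a sliding homotopy for points in a collar. The factorization
\[
p_0^{-1}(b_0)\xrightarrow{g_1|}p_1^{-1}(f_1(b_0))\hookrightarrow p_1^{-1}(U)\hookrightarrow p^{-1}(\widetilde{U})
\]
is exactly the chain of $\CC$-homology equivalences the paper uses, and you correctly track coefficient systems through the homotopy equivalences using closure of $\CC$ under pullbacks.

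Two small points on the basis construction. First, your diagnosis of the problem with the naive collar $U\cup\bigl(f_1^{-1}(U)\times[0,s)\bigr)$ is slightly off: such a set \emph{is} open in $B$ (its preimages in $B_1$ and $B_0\times[0,1]$ are $U$ and $f_1^{-1}(U)\times[0,s)$, both open). The defect is not openness but that the collection of such sets, together with the interior cylinders and right-end collars, fails to be a basis --- the paper records a counterexample of Grigoriev ($B_0=\bR$, $B_1=B_2=\point$, and the open set $\point\cup\{(s,t):s<(1+t^2)^{-1}\}$). Second, your proposed fix --- letting the collar width be a continuous function $\varepsilon\colon f_1^{-1}(U)\to(0,1)$ --- is the right idea, but constructing such a continuous $\varepsilon$ subordinate to an arbitrary open set may require paracompactness or metrizability assumptions on $B_0$ that the lemma does not impose. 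The paper instead takes collar sets of the form $V\cup_{f_1}\bigl(\bigcup_\alpha U_\alpha\times[0,\varepsilon_\alpha)\bigr)$, where $\{U_\alpha\}\subseteq\cU_0$ is a cover of $f_1^{-1}(V)$ by basic opens and each $\varepsilon_\alpha$ is a constant; this piecewise-constant variant is still open, still contractible (it deformation retracts onto $V$), always yields a basis, and requires no additional point-set hypotheses. With that substitution your argument goes through verbatim.
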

We will prove this lemma first, then use it to deduce Proposition \ref{pGroupCompletion2}, and then finally show that this implies the ``$\CC$-group-completion theorem'' (Theorem \ref{pGroupCompletion}) as a special case.

If all the spaces involved are Hausdorff, one can deduce Lemma \ref{lStrongLerayHF1} from the $\CC$-homology fibration criterion (Theorem \ref{strongcrit}); see Remark \ref{rDeduceFromCriterion} below. However, we believe it is illuminating to give a direct proof too. Since this does not use the $\CC$-homology fibration criterion, it does not depend on the equivalence between Serre and Leray $\CC$-homology fibrations from \S\ref{ss-strong-homology-fib}.
\begin{proof}[Proof of Lemma \ref{lStrongLerayHF1}]
Let $\cU_i$ be a basis for $B_i$. Then the following is a basis $\cU$ for the double mapping cylinder $B = B_1 \cup_{f_1} (B_0 \times [0,1]) \cup_{f_2} B_2$:
\begin{equation*}\label{eDoubleMappingCylinder}
\begin{array}{rl|l}
\text{(a)} & V\cup_{f_1}\bigl( \bigcup_\alpha U_\alpha \times [0,\varepsilon_\alpha) \bigr) & V\in\cU_1, \varepsilon_\alpha >0 \text{ and } U_\alpha\in\cU_0 : \bigcup_\alpha U_\alpha = f_1^{-1}(V)\\
\text{(b)} & U\times (\beta,\gamma)  & U\in\cU_0 \text{ and } 0<\beta <\gamma <1\\
\text{(c)} & \bigl( \bigcup_\alpha U_\alpha \times (1\!-\!\varepsilon_\alpha,1] \bigr)\cup_{f_2} V & V\in\cU_2, \varepsilon_\alpha >0 \text{ and } U_\alpha\in\cU_0 : \bigcup_\alpha U_\alpha = f_2^{-1}(V).
\end{array}
\end{equation*}
Pictorially:
\begin{center}
\begin{tikzpicture}
[x=1mm,y=1mm,font=\small]
\begin{scope}
\draw[densely dotted] (0,0)--(5,0)--(5,3)--(10,3)--(10,7)--(7,7)--(7,10)--(0,10);
\draw[very thick] (0,0)--(0,10);
\draw[decorate,decoration={brace,amplitude=4pt}] (-1,0)--(-1,10);
\node at (-2,5) [anchor=east] {$V$};
\draw[decorate,decoration={brace,amplitude=4pt,mirror}] (11,0)--(11,10);
\node at (12,5) [anchor=west] {$f_1^{-1}(V)$};
\node at (0,-3) [color=black!70,font=\footnotesize] {$0$};
\node at (5,-7) [font=\normalsize] {(a)};
\end{scope}
\begin{scope}[xshift=40mm]
\draw[densely dotted] (0,0) rectangle (10,10);
\draw[decorate,decoration={brace,amplitude=4pt,mirror}] (11,0)--(11,10);
\node at (12,5) [anchor=west] {$U$};
\node at (0,-3) [color=black!70,font=\footnotesize] {$\beta$};
\node at (10,-3) [color=black!70,font=\footnotesize] {$\gamma$};
\node at (5,-7) [font=\normalsize] {(b)};
\end{scope}
\begin{scope}[xshift=90mm]
\draw[densely dotted] (10,0)--(4,0)--(4,4)--(0,4)--(0,8)--(7,8)--(7,10)--(10,10);
\draw[very thick] (10,0)--(10,10);
\draw[decorate,decoration={brace,amplitude=4pt}] (-1,0)--(-1,10);
\node at (-2,5) [anchor=east] {$f_2^{-1}(V)$};
\draw[decorate,decoration={brace,amplitude=4pt,mirror}] (11,0)--(11,10);
\node at (12,5) [anchor=west] {$V$};
\node at (10,-3) [color=black!70,font=\footnotesize] {$1$};
\node at (5,-7) [font=\normalsize] {(c)};
\end{scope}
\end{tikzpicture}
\end{center}
It is not enough to simply take (\v{a}): sets of the form $V\cup_{f_1}\bigl( f_1^{-1}(V)\times [0,\varepsilon) \bigr)$ (and similarly (\v{c})), by the following counterexample pointed out to the authors by Ilya Grigoriev. Take $B_0 = \bR$ and $B_1 = B_2 = \point$. Then the subset $\point\cup\{(s,t) \,|\, s<(1+t^2)^{-1} \}$ of the double mapping cylinder is open but is not covered by sets of the form (\v{a}), (b) and (\v{c}).\footnote{In \cite{MSe} it appears to be assumed (in the proof of Proposition 3) that the collection (\v{a}), (b) and (\v{c}) is a basis for the double mapping cylinder. This is not a major problem though, since the proof is not essentially made any more complicated by having to admit sets of the more general form (a) and (c).} But if one allows the more general sets of the form (a) and (c) then it is not hard to check that this is, as claimed, a basis for the double mapping cylinder.

If the bases $\cU_i$ consist of contractible sets then so will $\cU$. Now assume that $\cU_i$ is a basis for $B_i$ witnessing that $p_i\colon E_i\to B_i$ is a Leray $\CC$-homology fibration. We will show that $p\colon E\to B$ is also a Leray $\CC$-homology fibration witnessed by the basis $\cU$ for $B$. To do this we need to show that for any $b\in W\in \cU$, the inclusions
\[
p^{-1}(b) \xrightarrow{i} p^{-1}(W) \xrightarrow{j} \hofib_W(p)\]
induce an isomorphism $H_*(p^{-1}(b);i^* j^* \cF) \cong H_*(p^{-1}(W);j^* \cF)$ for any local coefficient system $\cF$ on $\hofib_W(p)$ in $\CC$. There are three essentially different cases of $b\in W\in\cU$ to check:
\begin{center}
\begin{tikzpicture}
[x=1mm,y=1mm,font=\small]
\begin{scope}
\draw[densely dotted] (0,0)--(5,0)--(5,3)--(10,3)--(10,7)--(7,7)--(7,10)--(0,10);
\draw[very thick] (0,0)--(0,10);
\draw[decorate,decoration={brace,amplitude=4pt}] (-1,0)--(-1,10);
\node at (-2,5) [anchor=east] {$V$};
\node at (0,-4) [anchor=base,color=black!70,font=\footnotesize] {$0$};
\node at (0,5) [circle,fill,inner sep=1pt] {};
\node at (0,5) [anchor=west] {$b$};
\node at (5,-7) [font=\normalsize] {(i)};
\end{scope}
\begin{scope}[xshift=70mm]
\draw[densely dotted] (0,0) rectangle (10,10);
\draw[decorate,decoration={brace,amplitude=4pt,mirror}] (11,0)--(11,10);
\node at (12,5) [anchor=west] {$U$};
\node at (0,-4) [anchor=base,color=black!70,font=\footnotesize] {$\beta$};
\node at (3.5,-4) [anchor=base,color=black!70,font=\footnotesize] {$\delta$};
\node at (10,-4) [anchor=base,color=black!70,font=\footnotesize] {$\gamma$};
\node at (3,5) [circle,fill,inner sep=1pt] {};
\node at (3,5) [anchor=west] {$b$};
\node at (5,-7) [font=\normalsize] {(iii)};
\end{scope}
\begin{scope}[xshift=35mm]
\draw[densely dotted] (0,0)--(5,0)--(5,3)--(10,3)--(10,7)--(7,7)--(7,10)--(0,10);
\draw[very thick] (0,0)--(0,10);
\draw[decorate,decoration={brace,amplitude=4pt}] (-1,0)--(-1,10);
\node at (-2,5) [anchor=east] {$V$};
\node at (0,-4) [anchor=base,color=black!70,font=\footnotesize] {$0$};
\node at (3.5,-4) [anchor=base,color=black!70,font=\footnotesize] {$\delta$};
\node at (3,5) [circle,fill,inner sep=1pt] {};
\node at (3,5) [anchor=west] {$b$};
\node at (5,-7) [font=\normalsize] {(ii)};
\end{scope}
\end{tikzpicture}
\end{center}

Note that in case (i) the point $b$ is an element of $V\subseteq W$, whereas in cases (ii) and (iii) we have $b=(a,\delta)$ for some $a$ in $f_1^{-1}(V)$ or $U$, and $\delta >0$.

Suppose first we are in case (i) and fix a local coefficient system $\cF$ on $\hofib_W(p)$. We will say $\cF$-homology to mean homology with coefficients in pullbacks of $\cF$. The inclusion $p^{-1}(b) \hookrightarrow p^{-1}(W)$ factors through the inclusion $p^{-1}(V)\hookrightarrow p^{-1}(W)$, which is a homotopy equivalence since there is an evident deformation retraction of $p^{-1}(W)$ onto $p^{-1}(V)$. Hence it suffices to show that $p^{-1}(b)\hookrightarrow p^{-1}(V)$ induces an isomorphism on $\cF$-homology. But this is the same as $p_1^{-1}(b)\hookrightarrow p_1^{-1}(V)$, and the coefficients $\cF$ are pulled back to this through $\hofib_V(p_1)$, so this does induce an isomorphism on $\cF$-homology since $p_1$ is a Leray $\CC$-homology fibration (and $V$ is a part of a basis witnessing this).

Case (iii) is very similar to case (i), but case (ii) requires a little more care. Again fix a local coefficient system $\cF$ on $\hofib_W(p)$ that is in $\CC$. As before, $p^{-1}(W)$ deformation retracts onto $p^{-1}(V)$, so the inclusion $i\colon p^{-1}(b)\hookrightarrow p^{-1}(W)$ factors up to homotopy as
\[
p^{-1}(b) \to p^{-1}(f_1(a)) \hookrightarrow p^{-1}(V) \hookrightarrow p^{-1}(W),
\]
where the first map is a restriction of $g_1$ and the other two are inclusions. Call this composite map $i^\prime$.

The middle map is the same as $p_1^{-1}(f_1(a)) \hookrightarrow p_1^{-1}(V)$, and the coefficients $\cF$ are pulled back through $\hofib_V(p_1)$. Therefore it induces an isomorphism on $\cF$-homology since $p_1$ is a Leray $\CC$-homology fibration. The third map is a homotopy equivalence, and the first map is the same as $p_0^{-1}(a) \to p_1^{-1}(f_1(a))$, which is a $\CC$-homology equivalence by hypothesis. Hence the composite map $i^\prime$ induces an isomorphism on $\cF$-homology. Moreover, it is homotopic to the map $i$, so a choice of homotopy $i\simeq i^\prime$ induces an isomorphism making the triangle
\begin{center}
\begin{tikzpicture}
[x=1mm,y=1mm]
\node (tl) at (0,5) {$H_*(p^{-1}(b);i^* j^* \cF)$};
\node (bl) at (0,-5) {$H_*(p^{-1}(b);(i^\prime)^* j^* \cF)$};
\node (r) at (60,0) {$H_*(p^{-1}(W);j^* \cF)$};
\draw[->] (tl) to node[above,font=\small]{$i_*$} (r);
\draw[->] (bl) to node[below,font=\small]{$(i^\prime)_*$} (r);
\node at (0,0) {\rotatebox{270}{$\cong$}};
\end{tikzpicture}
\end{center}
commute. Hence $i$ also induces an isomorphism on $\cF$-homology, as required.
\end{proof}

\begin{remark}\label{rDeduceFromCriterion}
When the spaces $E_i$ and $B_i$ are all Hausdorff, one can alternatively deduce Lemma \ref{lStrongLerayHF1} from the $\CC$-homology fibration criterion (Theorem \ref{strongcrit}), as follows. Note that it is enough to show that $p$ is a Leray $\CC$-homology fibration on each of the mapping cylinders forming the double mapping cylinder. Filter the mapping cylinder $X=B_1\cup_{f_1}(B_0\times [0,1])$ by $X_0 = B_1$ and $X_n=X$ for $n\geq 1$, and take $U_1 = B_1\cup_{f_1}(B_0\times [0,\tfrac12))$ and $U_n=X$ for $n\geq 2$. Note that a mapping cylinder of Hausdorff spaces is Hausdorff.

Since $p_0$ and $p_1$ are Leray $\CC$-homology fibrations, they are locally stalk-like on $\CC$-homology. This property is inherited by the mapping cylinder,\footnote{To see this we need to take a basis for the mapping cylinder similar to the basis for the double mapping cylinder described in the proof of Lemma \ref{lStrongLerayHF1}.} so condition (i) of Theorem \ref{strongcrit} holds. Condition (ii) follows from the assumption that $p_0$ and $p_1$ are Leray $\CC$-homology fibrations, plus Proposition \ref{LimpS}. For (iii) we can use the deformation retraction which gradually squashes a mapping cylinder onto its base to define both $h_t$ and $H_t$; this satisfies (a) and (b) by construction. Finally, condition (iii)(c) follows from the assumption in Lemma \ref{lStrongLerayHF1} that $p_0^{-1}(b)\to p_1^{-1}(f_1(b))$ is a $\CC$-homology equivalence. Theorem \ref{strongcrit} therefore tells us that $p$ is a Serre $\CC$-homology fibration on the mapping cylinder $B_1\cup_{f_1}(B_0\times [0,1])$. But as mentioned above, $p$ is locally stalk-like on $\CC$-homology, so by Proposition \ref{SimpL} it is also a Leray $\CC$-homology fibration on the mapping cylinder.
\end{remark}
\begin{proof}[Proof of Proposition \ref{pGroupCompletion2}]
\hspace{0pt}\newline
\noindent $\bullet$ \emph{Step 1 $(n=0)$.} The map $\dnorm{p_\bullet}^0 \colon \dnorm{E_\bullet}^0 \to \dnorm{B_\bullet}^0$ of $0$-skeleta is just the map $p_0\colon E_0 \to B_0$, and so is a Leray $\CC$-homology fibration by hypothesis.

\noindent $\bullet$ \emph{Step 2 $(n\geq 1)$.} The map $\dnorm{p_\bullet}^n \colon \dnorm{E_\bullet}^n \to \dnorm{B_\bullet}^n$ of $n$-skeleta is the map of double mapping cylinders induced by
\begin{center}
\begin{tikzpicture}
[x=1mm,y=1mm]
\node (tl) at (0,15) {$\dnorm{E_\bullet}^{n-1}$};
\node (tm) at (30,15) {$\partial \Delta^n \times E_n$};
\node (tr) at (60,15) {$\Delta^n \times E_n$};
\node (bl) at (0,0) {$\dnorm{B_\bullet}^{n-1}$};
\node (bm) at (30,0) {$\partial \Delta^n \times B_n$};
\node (br) at (60,0) {$\Delta^n \times B_n.$};
\draw[->] (tm) to (tl);
\draw[->] (bm) to (bl);
\draw[->] (tl) to node[left,font=\small]{$\dnorm{p_\bullet}^{n-1}$} (bl);
\draw[->] (tm) to node[right,font=\small]{$1\times p_n$} (bm);
\draw[->] (tr) to node[right,font=\small]{$1\times p_n$} (br);
\incl{(tm)}{(tr)}
\incl{(bm)}{(br)}
\end{tikzpicture}
\end{center}
Hence we just need to verify the conditions of Lemma \ref{lStrongLerayHF1} in this case. The left vertical map is a Leray $\CC$-homology fibration by induction, and the other two vertical maps are too since $p_n$ is. The right-hand square induces homeomorphisms (and therefore $\CC$-homology equivalences) on set-theoretic fibers, since the horizontal maps are just inclusions. It therefore remains to prove that the left-hand square above induces $\CC$-homology equivalences on set-theoretic fibers. We can rewrite this square as:
\begin{center}
\begin{tikzpicture}
[x=1mm,y=1mm]
\node (tl) at (0,15) {$\bigl( \coprod_{k\leq n-1} (\Delta^k \times E_k) \bigr)/\sim$};
\node (tm) at (60,15) {$\partial \Delta^n \times E_n$};
\node (bl) at (0,0) {$\bigl( \coprod_{k\leq n-1} (\Delta^k \times B_k) \bigr)/\sim$};
\node (bm) at (60,0) {$\partial \Delta^n \times B_n$};
\draw[->] (tm) to node[above,font=\small]{$g_n$} (tl);
\draw[->] (bm) to node[below,font=\small]{$f_n$} (bl);
\draw[->] (tl) to node[left,font=\small]{$\dnorm{p_\bullet}^{n-1}$} (bl);
\draw[->] (tm) to node[right,font=\small]{$1\times p_n$} (bm);
\end{tikzpicture}
\end{center}
Let $(a,b)\in \partial\Delta^n \times B_n$. The fiber of $1\times p_n$ over this point is $\{a\} \times p_n^{-1}(b)$. Its image $f_n(a,b)$ may have multiple representatives, but it has a unique representative $(a^\prime,b^\prime)\in \Delta^k \times B_k$ with $a^\prime$ in the interior of $\Delta^k$. The fiber of $\dnorm{p_\bullet}^{n-1}$ over this point is $\{a^\prime\} \times p_k^{-1}(b^\prime)$. Note that $b^\prime = d_{i_{n-k}}\circ \dotsb\circ d_{i_1}(b)$ for some string of face operators. Write $b_j = d_{i_j}\circ \dotsb\circ d_{i_1}(b)$. The map of set-theoretic fibers may be identified with the composition
\[
p_n^{-1}(b) \longrightarrow p_{n-1}^{-1}(b_1) \longrightarrow p_{n-2}^{-1}(b_2) \longrightarrow \dotsb \longrightarrow p_k^{-1}(b_{n-k}) = p_k^{-1}(b^\prime)
\]
in which the $j$th map is the restriction of the face map $d_{i_j}\colon B_{n-j+1} \to B_{n-j}$. But these are all $\CC$-homology equivalences by hypothesis.

\noindent $\bullet$ \emph{Step 3 $(n=\infty)$.}
To finish the proof we will apply the $\CC$-homology fibration criterion. The geometric realization $X=\dnorm{B_\bullet}$ is filtered by its skeleta $X_n=\dnorm{B_\bullet}^n$, so we need to check the conditions of Theorem \ref{strongcrit} in this case. First note that each $B_n$ is Hausdorff by assumption, so each $\dnorm{B_\bullet}^n$ is also Hausdorff since it is formed by taking iterated double mapping cylinders as described in the previous step, and one can see that a double mapping cylinder of Hausdorff spaces is Hausdorff using the basis for the topology of a double mapping cylinder described in the proof of Lemma \ref{lStrongLerayHF1}.\footnote{In general, the geometric realization $\dnorm{B_\bullet}$ of a levelwise Hausdorff semi-simplicial space need not be Hausdorff (c.f.\ Proposition A.3 of \cite{Ebert2009}), but we only require each $n$-skeleton to be Hausdorff.} For the same reason the skeleta $\dnorm{E_\bullet}^n = (\dnorm{p_\bullet})^{-1}(\dnorm{B_\bullet}^n)$ are also Hausdorff. We now check conditions (i)--(iii).

Let $\dot{\Delta}^n$ denote the simplex $\Delta^n$ with its barycenter removed. Under the quotient maps
\[
\textstyle{\bigsqcup}_{n\geq 0}(\Delta^n \times B_n) \to \dnorm{B_\bullet} = X \qquad\text{and}\qquad \textstyle{\bigsqcup}_{n\geq 0}(\Delta^n \times E_n) \to \dnorm{E_\bullet} = Y,
\]
$X_n$ is the image of $\Delta^n \times B_n$ and $r^{-1}(X_n)$ is the image of $\Delta^n \times E_n$. We define $U_n$ to be the image of $\dot{\Delta}^n \times B_n$; note that $r^{-1}(U_n)$ is the image of $\dot{\Delta}^n \times E_n$. Pick any deformation retraction of $\dot{\Delta}^n$ onto $\partial\Delta^n$. This induces the homotopies $h_t$ and $H_t$ for condition (iii) of Theorem \ref{strongcrit}. Condition (iii)(c) follows from the assumption that the maps \eqref{eRestrictionOfFaceMap} are $\CC$-homology equivalences. By the previous step, $\dnorm{p_\bullet}$ is a Leray $\CC$-homology fibration on each skeleton $X_n$, so in particular it is locally stalk-like on $\CC$-homology. This property passes to the open subspaces $U_n$ of the $X_n$, so condition (i) is satisfied. For condition (ii): $\dnorm{p_\bullet}$ is a Leray $\CC$-homology fibration on $X_0$ by Step 1 above, and therefore by Proposition \ref{LimpS} a Serre $\CC$-homology fibration. The difference $X_n \smallsetminus X_{n-1}$ is $\mathrm{int}(\Delta^n) \times B_n$, over which $\dnorm{p_\bullet}$ is simply $1\times p_n$, which is a Leray $\CC$-homology fibration since $p_n$ is assumed to be one. Thus Theorem \ref{strongcrit} implies that $\dnorm{p_\bullet}$ is a Serre $\CC$-homology fibration.
\end{proof}
\begin{proof}[Proof of Theorem \ref{pGroupCompletion}]
We just need to check the conditions of Proposition \ref{pGroupCompletion2} for the map of semi-simplicial spaces $p_\bullet \colon E_\bullet \to B_\bullet$ where $E_n = \cM^n \times X$ and $B_n = \cM^n$, with the usual face maps of the bar construction, and $p_n \colon \cM^n \times X \to \cM^n$ is the projection. Since $\cM$ and $X$ are Hausdorff so is each $E_n$ and $B_n$.\footnote{In fact $\dnorm{E_\bullet}$ and $\dnorm{B_\bullet}$ are Hausdorff. Although levelwise Hausdorff semi-simplicial spaces do not in general have Hausdorff geometric realizations, it is true for nerves of Hausdorff monoids. One can picture points of $\dnorm{B_\bullet} = \dnorm{\cM^\bullet}$ as configurations on $(0,1)$ labelled by elements of $\cM$ which multiply when points collide and where points may fall off the ends of the interval. Points of $\dnorm{E_\bullet} = \dnorm{\cM^\bullet \times X}$ may be viewed as similar configurations on $(0,1]$, where in addition the point $1$ is labelled by an element of $X$ which is acted on when a point labelled in $\cM$ collides with it. In this picture it is easy to write down separating open neighborhoods for any pair of distinct points.} Also, $\cM^n$ is locally contractible, so the trivial fiber bundle $p_n$ over it is a Leray $\CC$-homology fibration. We also need to check that for all face maps
\begin{center}
\begin{tikzpicture}
[x=1mm,y=1mm]
\node (tl) at (0,15) {$\cM^n \times X$};
\node (tr) at (30,15) {$\cM^n$};
\node (bl) at (0,0) {$\cM^{n-1} \times X$};
\node (br) at (30,0) {$\cM^{n-1}$};
\draw[->] (tl) to node[above,font=\small]{$p_n$} (tr);
\draw[->] (bl) to node[below,font=\small]{$p_{n-1}$} (br);
\draw[->] (tl) to node[left,font=\small]{$d_j$} (bl);
\draw[->] (tr) to node[right,font=\small]{$d_j$} (br);
\end{tikzpicture}
\end{center}
and elements $b=(m_1,\ldots,m_n)\in \cM^n$, the map $p_n^{-1}(b) \to p_{n-1}^{-1}(d_j(b))$ is a $\CC$-homology equivalence. For $0\leq j<n$ this map is just the identity $X\to X$. For $j=n$, it is the map $m_n\cdot -\colon X\to X$ which acts on $X$ by $m_n$. But this is a $\CC$-homology equivalence by hypothesis.
\end{proof}

\small
\phantomsection
\addcontentsline{toc}{section}{References}
\renewcommand{\bibfont}{\normalfont\small}
\defbibnote{myprenote}{Links to ArXiv versions of published articles are provided in \{braces\}.}
\printbibliography[prenote=myprenote]

\normalsize

\vspace*{1em}

\noindent{\small\scshape Department of Mathematics, Stanford University, Building 380, Office 383A, Stanford, California, 94305.}

\noindent{\sffamily jkmiller@math.stanford.edu}

\vspace*{1em}

\noindent {\small\scshape Mathematisches Institut, WWU M{\"u}nster, Einsteinstra{\ss}e 62, 48149 M{\"u}nster, Germany}

\noindent {\sffamily mpalm{\textunderscore}01@uni-muenster.de}

\end{document}